\theoremstyle{plain}
\newtheorem{theorem}{Theorem}[section]
\newtheorem{lemma}[theorem]{Lemma}
\newtheorem{proposition}[theorem]{Proposition}
\newtheorem{corollary}[theorem]{Corollary}
\theoremstyle{definition}
\theoremstyle{remark}
\newtheorem{remark}[theorem]{Remark}
\newcommand{\ac}{\ensuremath{\mathcal{A}}}
\newcommand{\fc}{\ensuremath{\mathcal{F}}}
\newcommand{\dc}{\ensuremath{\mathcal{D}}}
\newcommand{\hc}{\ensuremath{\mathcal{H}}}
\newcommand{\tc}{\ensuremath{\mathcal{T}}}
\newcommand{\bc}{\ensuremath{\mathcal{B}}}
\newcommand{\As}{\mathbb{A}}
\def\bin #1#2 {\left( \matrix { #1 \cr #2 \cr } \right) }
\begin{document}

\title[Bivariant class of degree one]
{Bivariant class of degree one}

\author{Vincenzo Di Gennaro }
\address{Universit\`a di Roma \lq\lq Tor Vergata\rq\rq, Dipartimento di Matematica,
Via della Ricerca Scientifica, 00133 Roma, Italy.}
\email{digennar@axp.mat.uniroma2.it}

\author{Davide Franco }
\address{Universit\`a di Napoli
\lq\lq Federico II\rq\rq, Dipartimento di Matematica e Applicazioni
\lq\lq R. Caccioppoli\rq\rq, Via Cintia, 80126 Napoli, Italy.}
\email{davide.franco@unina.it}

\author{Carmine Sessa }
\address{Universit\`a di Napoli
\lq\lq Federico II\rq\rq, Dipartimento di Matematica e Applicazioni
\lq\lq R. Caccioppoli\rq\rq, Via Cintia, 80126 Napoli, Italy.}
\email{carmine.sessa2@unina.it}

\abstract Let $f:X\to Y$ be a projective birational morphism,
between complex quasi-projective varieties. Fix a bivariant class
$\theta \in H^0(X\stackrel{f}\to Y)\cong
Hom_{D^{b}_{c}(Y)}(Rf_*\mathbb A_X, \mathbb A_Y)$ (here $\mathbb A$
is a Noetherian commutative ring with identity, and $\mathbb A_X$
and $\mathbb A_Y$ denote the constant sheaves). Let
$\theta_0:H^0(X)\to H^0(Y)$ be the induced Gysin morphism. We say
that {\it $\theta$ has degree one} if $\theta_0(1_X)= 1_Y\in
H^0(Y)$. This is equivalent to say that $\theta$ is a section of the
pull-back $f^*: \mathbb A_Y\to Rf_*\mathbb A_X$, i.e. $\theta\circ
f^*={\text{id}}_{\mathbb A_Y}$, and it is also equivalent to say
that $\mathbb A_Y$ is a direct summand of $Rf_*\mathbb A_X$. We
investigate the consequences of the existence of a bivariant class
of degree one. We prove explicit formulas relating the (co)homology
of $X$ and $Y$, which extend the classic formulas of the blowing-up.
These formulas are compatible with the duality morphism. Using
which, we prove that the existence of a bivariant class $\theta$ of
degree one for a resolution of singularities, is equivalent to
require that $Y$ is an $\mathbb A$-homology manifold. In this case
$\theta$ is unique, and  the Betti numbers of the singular locus
${\text{Sing}}(Y)$ of $Y$ are related with the ones of
$f^{-1}({\text{Sing}}(Y))$.

\bigskip\noindent {\it{Keywords}}: Projective variety, Derived
category, Poincar\'e\,-\,Verdier Duality, Bivariant Theory, Gysin morphism, Homology manifold,
Resolution of singularities, Intersection cohomology, Decomposition
Theorem.

\medskip\noindent {\it{MSC2010}}\,: Primary 14B05; Secondary 14E15, 14F05, 14F43, 14F45, 32S20, 32S60, 57P10, 58K15.

\endabstract
\maketitle

\bigskip

\section{Introduction}

Let $\mathbb A$ be a Noetherian commutative ring with identity. All
the (co)homology groups  occurring in this paper will be with
$\mathbb A$-coefficients.

Consider a resolution of singularities $f:X\to Y$ of a complex
quasi-projective variety $Y$ of dimension $n$. When $\mathbb A$ is a field,
the Decomposition Theorem \cite[p. 161]{Dimca2} implies there exists a certain
decomposition
\begin{equation}\label{splittingintro}
Rf_*\mathbb A_X[n] \cong  IC_Y^{\bullet} \oplus \mathcal H
\end{equation}
in $D_{c}^{b}(Y)$, the derived category of bounded constructible
complexes of $\mathbb A$-sheaves in $Y$. If $Y$ is an $\mathbb
A$-homology manifold \cite{Brasselet}, then $IC_Y^{\bullet}\cong \mathbb A_Y[n]$.
Hence, we get
\begin{equation}\label{splittingintro2}
Rf_*\mathbb A_X[n] \cong \mathbb A_Y[n] \oplus \mathcal H.
\end{equation}

One of our purposes is to extend the splitting
(\ref{splittingintro2}) to every ring $\mathbb A$, for which the
Decomposition Theorem providing (\ref{splittingintro}) is not
available. Specifically, we will see that the splitting
(\ref{splittingintro2}) is equivalent to the existence of a
\textit{bivariant class of degree one} for $f$, that we are about to
define.

Bivariant Theory was introduced in early 1980 by W. Fulton and R.
MacPherson \cite{FultonCF}, for the purpose of unifying covariant
and contravariant theories. The sheaf-theoretic bivariant homology
theory associates to a continuous map of topological spaces
$X\stackrel{f}{\to} Y$,  the graded group with homogeneous
components
\begin{equation*}
H^i (X\stackrel{f}{\to}  Y)={\rm{Hom}}_{D^b_c(Y)}(f_!\mathbb A_X,
\mathbb A_Y[i]),
\end{equation*}
whose elements are called bivariant classes. Bivariant Theory allows
a systematic study of generalized wrong-way Gysin morphisms. These
morphisms find a great use especially in the study of morphisms of
smooth varieties or, more generally, of locally complete
intersection morphisms.

In some cases, a bivariant class determines a very interesting
splitting in the derived category \cite[p. 327]{Jouanolou}. Another
purpose of this paper is to show that a similar splitting can be
proved in a more general context, and that the natural definition
involved is that of bivariant class of degree one. It is worthy to
stress that such a splitting turns out to be compatible with
Poincar\'e  Duality (Corollary 5.1). Consider a bivariant class
belonging to the $0$-th homogeneous component (now we assume $f:X\to
Y$ is proper):
$$\theta \in H^0(X\stackrel{f}\to Y)={\rm{Hom}}_{D^b_c(Y)}(f_*\mathbb A_X, \mathbb A_Y).$$ Let
$\theta_0:H^0(X)\to H^0(Y)$  be the induced Gysin morphism. We say
that $\theta$ \textit{has degree one} (for $f$) if
$\theta_0(1_X)=1_Y$. This is equivalent to say that $\theta$ is a
section of the pull-back $f^*: \mathbb A_Y\to Rf_*\mathbb A_X$, i.e.
$\theta\circ f^*={\text{id}}_{\mathbb A_Y}$. We will see that the
existence of a bivariant class of degree one leads to a suitable
splitting in the derived category. Consequently, we deduce a series
of isomorphisms for (co)homology groups, which extend classic
formulas of the blowing-up, and that we have extensively used in
Noether-Lefschetz Theory \cite{RCMP}.

Examples of morphisms admitting a bivariant class of degree one are
blowing-ups at locally complete intersection subvarieties. Indeed,
the {\it orientation class} of the blowing-up \cite[p.
114]{FultonIT}, \cite[p. 131]{FultonCF} is a bivariant class of
degree one (Remark \ref{remintro}, $(iii)$). Other examples are
\textit{strong orientation classes} (of codimension $0$) $\theta\in
H^0(X\stackrel{f}\to Y)$ \cite[p. 27]{FultonCF}, \cite[p.
803]{Brasselet}, for maps $f$ between varieties of the same
dimension (Corollary 6.3). We will see that the class of birational
morphisms admitting a bivariant class of degree one is considerably
broader than the class admitting strong orientation classes. For
instance, any blowing-up at a locally complete intersection
subvariety admits a bivariant class $\theta$ of degree one, but it
is rather unlikely that $\theta$ is a strong orientation when the
center is not smooth (Remark 6.5).

We study this circle of questions for a morphism $f: X\to Y$
which is a resolution of singularities of a complex quasi-projective
variety $Y$ or, more generally, for a morphism from an $\mathbb
A$-homology manifold $X$, which is an isomorphism on a non-empty
open subset $U\cong f^{-1}(U) \subset X$. Our main results are
collected in the following two theorems. The first one, together
with its consequences for the (co)homology (see Section $4$), should
be compared  with \cite[p. 114-118]{FultonIT}, \cite[p.
327]{Jouanolou}, \cite[p. 263]{RCMP}, where similar results appear
in the study of the behavior of the (co)homology and of the Chow
groups, under blowing-up at a locally complete intersection
subvariety of a quasi-projective variety. The second theorem gives,
as far as we know, a new characterization of homology manifolds, in
terms of their resolution of singularities.

\medskip
\begin{theorem}
\label{Crossintro} Let $f:X\to Y$ be a  continuous and proper map,
with $Y$ path-connected. Let $U\subseteq Y$ be a non-empty open
subset such that $f$ induces an homeomorphism $f^{-1}(U)\cong U$.
Set $W= Y\backslash U$, and $\widetilde W=f^{-1}(W)$. The following
properties are equivalent.

\smallskip
$(i)$ There exists a bivariant class $\theta\in
Hom_{D^{b}_{c}(Y)}(Rf_*\mathbb A_X, \mathbb A_Y)$ of degree one.

\smallskip $(ii)$ In $D^{b}_{c}(Y)$ there exists a cross isomorphism
$Rf_*\mathbb A_X\oplus \mathbb A_W\cong Rf_*\mathbb
A_{\widetilde W}\oplus \mathbb A_Y$.

\smallskip
$(iii)$ In $D^{b}_{c}(Y)$ there exists a decomposition
$Rf_*\mathbb A_X \cong \mathbb A_Y \oplus \mathcal K$.
\end{theorem}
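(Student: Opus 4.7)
The plan is to build everything around the Mayer--Vietoris distinguished triangle attached to the open--closed stratification $Y=U\sqcup W$ and its pullback to $X$, combined with a scalar-is-a-unit argument obtained by restriction to $U$. Writing $j:U\hookrightarrow Y$ and $i:W\hookrightarrow Y$ for the open and closed inclusions and $j'$, $i'$ for their counterparts on $X$, the canonical open--closed triangle on $Y$ is $j_!\mathbb A_U\to\mathbb A_Y\to i_*\mathbb A_W\xrightarrow{+1}$, and its image under $Rf_*$, after identifying $Rf_*j'_!\mathbb A_{f^{-1}(U)}\cong j_!\mathbb A_U$ via properness together with the homeomorphism $f^{-1}(U)\cong U$, becomes $j_!\mathbb A_U\to Rf_*\mathbb A_X\to Rf_*\mathbb A_{\widetilde W}\xrightarrow{+1}$. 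These triangles assemble into a commutative square with vertical arrows $f^*$ and the restriction $\mathbb A_W\to Rf_*\mathbb A_{\widetilde W}$; the coincidence of the horizontal fibres makes the square homotopy cocartesian, producing the Mayer--Vietoris triangle
\[
\mathbb A_Y\xrightarrow{\alpha}\mathbb A_W\oplus Rf_*\mathbb A_X\xrightarrow{\beta}Rf_*\mathbb A_{\widetilde W}\xrightarrow{+1},\qquad(\star)
\]
whose first map $\alpha$ has components (restriction, $f^*$) up to a sign.

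From $(\star)$ the implication (i)$\Rightarrow$(ii) is immediate: the map $(0,\theta):\mathbb A_W\oplus Rf_*\mathbb A_X\to\mathbb A_Y$ retracts $\alpha$, so $(\star)$ splits and yields the cross isomorphism. For (i)$\Rightarrow$(iii) I would form the triangle $\mathbb A_Y\xrightarrow{f^*}Rf_*\mathbb A_X\to\mathcal K\xrightarrow{\delta}\mathbb A_Y[1]$ with $\mathcal K:=\mathrm{cone}(f^*)$; applying $\mathrm{Hom}(-,\mathbb A_Y)$ to the triangle, the induced map $\mathrm{Hom}(Rf_*\mathbb A_X,\mathbb A_Y)\to\mathrm{Hom}(\mathbb A_Y,\mathbb A_Y)$ hits the identity (witnessed by the retraction $\theta$), so the connecting morphism $\delta$ is forced to vanish and the triangle splits as $Rf_*\mathbb A_X\cong\mathbb A_Y\oplus\mathcal K$.

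For the converse directions (ii)$\Rightarrow$(i) and (iii)$\Rightarrow$(i) the plan is to extract a single candidate map $a:Rf_*\mathbb A_X\to\mathbb A_Y$ and then rescale it to a section of $f^*$. In case (iii), $a$ is the projection onto the $\mathbb A_Y$-summand; in case (ii), $a$ is the $(1,2)$-entry of any isomorphism $\Phi:\mathbb A_W\oplus Rf_*\mathbb A_X\xrightarrow{\cong}\mathbb A_Y\oplus Rf_*\mathbb A_{\widetilde W}$. Restriction to $U$ annihilates every summand different from the relevant copies of $\mathbb A_U$, because $\mathcal K|_U=0$, $\mathbb A_W|_U=0$ and $Rf_*\mathbb A_{\widetilde W}|_U=0$; this leaves $a|_U:\mathbb A_U\xrightarrow{\cong}\mathbb A_U$ an isomorphism. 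The path-connectedness of $Y$ gives $\mathrm{End}(\mathbb A_Y)=H^0(Y)=\mathbb A$, so $a\circ f^*=c\cdot\mathrm{id}_{\mathbb A_Y}$ for some $c\in\mathbb A$; since $c|_U$ is a unit in $H^0(U)$ by the preceding, $c$ itself is a unit of $\mathbb A$, and $\theta:=c^{-1}a$ is the sought-for bivariant class of degree one.

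The main obstacle is precisely this final unit argument: an abstract direct-summand decomposition, or an abstract cross isomorphism, is not automatically compatible with $f^*$, and one has to pass through the restriction to $U$ and exploit the path-connectedness of $Y$ to force the scalar $c$ to be a unit of $\mathbb A$. Dropping either hypothesis would allow $c$ to be a non-unit and leave the degree-one section out of reach.
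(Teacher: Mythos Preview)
Your proof is correct and, for the most part, parallels the paper's argument: the equivalence $(i)\Leftrightarrow(iii)$ via splitting the cone triangle of $f^*$, and the converse implications $(ii),(iii)\Rightarrow(i)$ via restriction to $U$ followed by the unit argument, are essentially the same in both treatments.

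The genuine difference lies in $(i)\Rightarrow(ii)$. The paper sets up two morphisms of distinguished triangles---the pullback diagram and a ``Gysin'' diagram built from $\theta$---and then invokes an abstract lemma in triangulated categories (Lemma~3.5) which, under the auxiliary hypothesis $\mathrm{Hom}_{\tc}(A,C_1[-1])=0$, applies the octahedral axiom to produce the cross isomorphism $B_1\oplus C\cong B\oplus C_1$. Your route is more direct: you observe that the commutative square with common horizontal fibre $j_!\mathbb A_U$ is homotopy cocartesian, hence yields a single Mayer--Vietoris triangle $(\star)$, and the retraction $(0,\theta)$ of $\alpha$ splits $(\star)$ in one stroke. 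This bypasses both the construction of the auxiliary morphism $\eta:Rf_*\mathbb A_{\widetilde W}\to\mathbb A_W$ and the Hom-vanishing condition. The paper's longer detour does buy something, however: it makes $\eta$ explicit as part of a morphism of triangles, and that map is used throughout Section~4 to write the induced (co)homology isomorphisms in closed form and to compare $\eta$ with the bivariant pull-back $i^*(\theta)$.

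One minor point worth tightening: your assertion that $\mathcal K|_U=0$ deserves a word of justification. Restricting the decomposition to $U$ gives $\mathbb A_U\cong\mathbb A_U\oplus\mathcal K|_U$; taking cohomology sheaves and stalks, one invokes that a Noetherian module admits no nontrivial direct-sum complement of itself. Alternatively---and this is closer to the paper's phrasing---simply note that $a$ is a split epimorphism, hence so is $a|_U\in\mathrm{End}(\mathbb A_U)=H^0(U)$, and in a commutative ring a one-sided inverse is two-sided.
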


\medskip
\begin{theorem}\label{homology}

Let $f:X\to Y$ be a projective birational morphism between
complex, irreducible, and quasi-projective varieties
of the same dimension $n$. Let $U$ be a non-empty Zariski open subset of $Y$
such that $f$ induces an isomorphism $f^{-1}(U)\cong U$. Set $W=Y\backslash U$.

$\bullet$ If  $Y$ is an  $\mathbb
A$-homology manifold, then there exists a bivariant class $\theta$
in $Hom(Rf _* \mathbb A _X , \mathbb A_Y)$
of degree one. In this
case, $\theta$ is unique, and there exists a decomposition
$Rf _* \mathbb A _X\cong \mathbb A _Y\oplus \mathcal K$,
with $\mathcal K$ supported on $W$. Moreover, if also $X$ is an
$\mathbb A$-homology manifold, then $\mathcal K[n]$ is self-dual.

$\bullet$
Conversely, if $X$ is an $\mathbb A$-homology manifold and there
exists a bivariant class $\theta \in Hom(Rf _* \mathbb A _X ,
\mathbb A_Y)$ of degree one, then  also $Y$ is an $\mathbb
A$-homology manifold.
\end{theorem}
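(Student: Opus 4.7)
The strategy is to invoke Theorem \ref{Crossintro} to translate everything into the splitting $Rf_*\mathbb A_X\cong \mathbb A_Y\oplus \mathcal K$, and to combine it with Verdier duality under the homology manifold hypotheses. For the existence and uniqueness of $\theta$ when $Y$ is a homology manifold, the key observation is that by Verdier duality and properness,
$$\mathrm{Hom}_{D^b_c(Y)}(Rf_*\mathbb A_X,\mathbb A_Y)\;\cong\;\mathrm{Hom}_{D^b_c(X)}(\mathbb A_X, f^!\mathbb A_Y).$$
The assumption that $Y$ is a homology manifold of complex dimension $n$ yields $\omega_Y\cong \mathbb A_Y[2n]$ and hence $f^!\mathbb A_Y\cong \omega_X[-2n]$, so the Hom group identifies with $\mathbb H^{-2n}(X,\omega_X)=H^{BM}_{2n}(X;\mathbb A)\cong \mathbb A$, generated by the fundamental class $[X]$ (as $X$ is irreducible of complex dimension $n$). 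I would take $\theta$ to be the class corresponding to $[X]$ and verify it has degree one by restricting to $U$: there $[X]|_U=[U]$ matches $\mathrm{id}_{\mathbb A_U}$, giving $\theta_0(1_X)|_U=1_U$, which lifts to $\theta_0(1_X)=1_Y$ because $H^0(Y)\to H^0(U)$ is the identity of $\mathbb A$ by irreducibility of $Y$. The same rank-one identification gives uniqueness. Theorem \ref{Crossintro} then provides $Rf_*\mathbb A_X\cong \mathbb A_Y\oplus \mathcal K$, and $\mathcal K|_U=0$ forces $\mathcal K$ to be supported on $W$.

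For the self-duality of $\mathcal K[n]$ when $X$ is also a homology manifold, I would apply $\mathcal D$ to the splitting: using $\mathcal D(\mathbb A_X)\cong \mathbb A_X[2n]$ and properness of $f$, we have $\mathcal D(Rf_*\mathbb A_X)\cong Rf_*\mathbb A_X[2n]\cong \mathbb A_Y[2n]\oplus \mathcal K[2n]$, while dualizing summand-wise gives $\omega_Y\oplus \mathcal D(\mathcal K)\cong \mathbb A_Y[2n]\oplus \mathcal D(\mathcal K)$. To match the $\mathbb A_Y[2n]$-summands I would appeal to the uniqueness of $\theta$: under these identifications $\mathcal D(f^*)$ becomes a bivariant class for $f$ restricting to the identity on $U$, hence of degree one, hence equal to $\theta[2n]$. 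The two direct-summand projections of $Rf_*\mathbb A_X[2n]$ onto $\mathbb A_Y[2n]$ therefore coincide, yielding $\mathcal D(\mathcal K)\cong \mathcal K[2n]$, i.e.\ $\mathcal K[n]$ is self-dual.

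For the converse, assume $X$ is a homology manifold and $\theta$ exists. Theorem \ref{Crossintro} gives $Rf_*\mathbb A_X\cong \mathbb A_Y\oplus \mathcal K$ with $\mathcal K$ supported on $W$; it suffices to exhibit mutually inverse isomorphisms $\mathbb A_Y[2n]\cong \omega_Y$. I would define
$$\phi := \mathcal D(f^*)\circ f^*[2n]\colon \mathbb A_Y[2n]\to \omega_Y,\qquad \psi := \theta[2n]\circ \mathcal D(\theta)\colon \omega_Y\to \mathbb A_Y[2n],$$
where the identifications use $\mathcal D(Rf_*\mathbb A_X)\cong Rf_*\mathbb A_X[2n]$, provided by $X$ being a homology manifold. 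Over $U$ all four factors are the identity, so $\phi|_U=\psi|_U=\mathrm{id}$. By Verdier duality $\mathrm{End}(\omega_Y)\cong \mathrm{End}(\mathbb A_Y)\cong H^0(Y;\mathbb A)=\mathbb A$ (by irreducibility), and restriction to $U$ is an isomorphism $\mathbb A\to \mathbb A$; hence $\phi\circ\psi$ and $\psi\circ\phi$, both lying in $\mathbb A$ and restricting to $\mathrm{id}$ on $U$, equal $\mathrm{id}$ globally. Thus $\phi$ is an isomorphism and $Y$ is a homology manifold. The main obstacle throughout is the duality-to-decomposition compatibility: two a priori different direct-sum decompositions of $Rf_*\mathbb A_X[2n]$ need not be canonically the same, and in both the self-duality step and the converse this is circumvented by the uniqueness of $\theta$ together with the fact that the relevant endomorphism rings reduce to $\mathbb A$ and are faithfully detected by restriction to $U$.
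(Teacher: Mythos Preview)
Your argument is correct and reaches the same conclusions, but the route for the self-duality of $\mathcal K[n]$ and for the converse is genuinely different from the paper's. The paper builds up Section~4 (the explicit cohomology and Borel--Moore decompositions, Propositions~4.1--4.5) and then proves Corollary~\ref{duality}, showing that the cap product with $[X]$ splits as $\mathcal D_X=\mathcal D_Y\oplus P_2$; it then applies this over every open subset of $Y$ to conclude that the sheaf-level maps $\mathbb A_Y[n]\to D(\mathbb A_Y[n])$ and $\mathcal K[n]\to D(\mathcal K[n])$ are isomorphisms. You bypass all of Section~4 and Corollary~\ref{duality}: you work directly in $D^b_c(Y)$, observing that once the projection $Rf_*\mathbb A_X\to \mathbb A_Y$ is fixed its fibre is unique up to isomorphism, and that the relevant projections (resp.\ sections) are forced to coincide because $\mathrm{Hom}(Rf_*\mathbb A_X,\mathbb A_Y)$, $\mathrm{Hom}(\mathbb A_Y,Rf_*\mathbb A_X)$, $\mathrm{End}(\mathbb A_Y)$ and $\mathrm{End}(\omega_Y)$ are all free $\mathbb A$-modules of rank one and are faithfully detected by restriction to $U$. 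This is slicker for the statement at hand; what the paper's approach buys in exchange is the explicit (co)homological formulas of Section~4 and the duality compatibility of Corollary~\ref{duality}, which are of independent use. One small point to make precise in your write-up: when you say ``over $U$ all four factors are the identity'', the identification $Rf_*\mathbb A_X[2n]\cong D(Rf_*\mathbb A_X)$ restricts over $U$ to the canonical isomorphism $\mathbb A_U[2n]\cong \omega_U$ given by $[U]$, not literally the identity; the compositions $\phi\circ\psi$ and $\psi\circ\phi$ still restrict to $\mathrm{id}$ because this isomorphism and its inverse cancel, and because $[X]$ and $[Y]$ restrict to the same class $[U]$ (birationality). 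With that caveat your argument is complete.
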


Theorem \ref{Crossintro} follows from more general results that hold
true in any triangulated categories (Lemma 3.2, Lemma 3.5). The
decompositions $(ii)$ and $(iii)$ in Theorem \ref{Crossintro} induce
explicit isomorphisms in (co)homology (Section 4), that are
compatible with the cap-product with the fundamental class (Section
5). Using which, one may easily prove Theorem \ref{homology}. Since
$\mathcal K[n]$ is self-dual, it follows that the Betti numbers of
the singular locus ${\text{Sing}}(Y)$ of $Y$, and of
$f^{-1}({\text{Sing}}(Y))$, are related (Remark \ref{remfin},
$(ii)$).

Other results are obtained along the way. Two of them seem  to us
worthy to note.
\begin{enumerate}
\item Suppose that the birational morphism $f: X\to Y$ admits a strong orientation class  $\theta\in
H^0(X\stackrel{f}\to Y)$. If one between X and Y is an
$\As$-homology manifold, then the other is too (Theorem
\ref{homology}, Corollary 6.3, Proposition 6.4). In this case, every
birational morphism between $X$ and $Y$ admits a strong orientation
class.
\item There are examples of  projective
birational maps $f:X\to Y$ such that $H^0(X\stackrel{f}\to Y)\neq
0$, without bivariant classes of degree one (Remark 6.2, $(iii)$).
\end{enumerate}

\bigskip
\section{Notations.}

\bigskip
$(i)$ Let $\mathbb A$ be a Noetherian commutative ring with identity
(e.g. $\mathbb A=\mathbb Z$ or $\mathbb A=\mathbb Q$). Every
topological space $V$ occurring in this paper will be assumed to be
imbeddable as a closed subspace of some $\mathbb R^N$ \cite[p.
32]{FultonCF} (e.g. a complex quasi-projective
variety, with the natural topology, and its open subsets).
Maps between topological spaces are assumed continuous of finite
cohomological dimension \cite[p. 83]{FultonCF} (e.g.
algebraic maps between  complex quasi-projective
varieties, and their restrictions on open subsets). We denote by
$H^{i}(V)$ and $H_{i}(V)$ the cohomology and the Borel-Moore
homology groups, with $\mathbb A$-coefficients, of $V$
\cite{FultonYT}. We denote by ${\text{Sh}}(V)$ the category of
sheaves of $\mathbb A$-modules on $V$. Let $D_{c}^{b}(V)$ denote the
derived category of bounded constructible complexes of $\mathbb
A$-sheaves $\mathcal{F^{\bullet}}$ on $V$ \cite{Dimca2},
\cite{DeCM3}. The symbol $IC_{V}^{\bullet}$ represents the
intersection cohomology complex of $V$. If $V$ is a smooth,
irreducible, quasi-projective complex variety of dimension $n$, then
$IC_{V}^{\bullet}\cong \mathbb A_V[n]$, where $\mathbb A_V$ is the
constant sheaf.

\medskip $(ii)$ Let $f:X\to Y$ be a continuous and proper map.
Fix a bivariant class \cite{FultonCF}
$$\theta \in H^0(X\stackrel{f}\to Y)\cong
Hom_{D^{b}_{c}(Y)}(Rf_*\mathbb A_X, \mathbb A_Y).$$ Let
$\theta_0:H^0(X)\to H^0(Y)$ be the induced Gysin homomorphism. We
say that {\it $\theta$ has degree one} (for the map $f$) if
$\theta_0(1_X)= 1_Y\in H^0(Y)$ \cite[p. 238]{Spanier}.

\medskip $(iii)$ Let $V$ be an irreducible, quasi-projective variety
of complex dimension $n$. We say that $V$ is an {\it $\mathbb A$-homology
manifold} if for all $y\in Y$ and for all $i\neq 2n$ one has
$H_{i}(Y, Y\backslash\{y\})=0$, and $H_{2n}(Y,
Y\backslash\{y\})\cong \mathbb A$ \cite{BM}, \cite{Brasselet} (by
$H_{i}(Y, Y\backslash\{y\})$ we denote the singular homology of a
pair). This is equivalent to say that $\mathbb A_Y[n]$ is self-dual,
or that $\mathbb A_Y[n]\cong IC_{Y}^{\bullet}$ \cite[p. 804-805]{Brasselet}.

\medskip $(iv)$ An element $\theta\in H^i(X\stackrel{f}\to Y)$
is called a {\it strong orientation of codimension $i$} for the morphism
$f:X\to Y$ if, for all morphisms $g:Z\to X$, the morphism
$$
 H^{\bullet}(Z\stackrel{g}\to X)\stackrel{\bullet\,\theta}\to H^{\bullet}(Z\stackrel{f\circ g}\to Y)
$$
is an isomorphism \cite[p. 26]{FultonCF}, \cite[p. 803]{Brasselet}.

\bigskip
\begin{remark}\label{remintro}
$(i)$ Observe that  {\it $\theta$ has degree one if and only if
$\theta$ is a section of the pull-back $f^*:\mathbb A_Y\to
Rf_*\mathbb A_X$}, i.e.
$$
\theta_0(1_X) =1_{Y} \iff \theta\circ f^*={\text{id}}_{\mathbb A_Y}.
$$

In fact,  assume that $\theta$ is of degree one.   For every $y\in
H^{\bullet}(Y)$ one has (\cite[p. 26, (G4), (i)]{FultonCF}, \cite[Spanier,
p. 251, 9]{Spanier}):
$$
\theta_*(f^*(y))=\theta_*({1_X}\cup f^*(y))=\theta_*(1_X) \cup
y=1_{Y}\cup y=y
$$
for every $y\in H^{\bullet}(Y)$. By functoriality, this means that the
morphism $\theta \circ f^*$ induces the identity on the
cohomology groups ${{\rm{id}}_{H^{\bullet}(Y)}}=\theta_*\circ f^*:H^{\bullet}(Y)\to
H^{\bullet}(Y)$. On the other hand, we have $ \theta\circ f^*\in
{\rm{Hom}}_{D^b_c(Y)}(\mathbb A_Y, \mathbb A_Y)\cong H^0(Y).$ It
follows that $\theta\circ f^*={\text{id}}_{\mathbb A_Y}$.

Conversely, if $\theta\circ f^*={\text{id}}_{\mathbb A_Y}$, then the
composite $ H^0(Y)\stackrel{f^*}\to H^0(X)\stackrel{\theta_0}\to
H^0(Y)$ is the identity of $H^0(Y)$. Since $f^*(1_Y)=1_X$, it
follows that $\theta_0(1_X)=1_{Y}$, i.e. $\theta$ has degree one.

\smallskip
$(ii)$ Let $f:X\to Y$ be a proper map. Let $\theta
\in H^0(X\stackrel{f}\to Y)$ be a bivariant class. If
$\theta_0(1_X)=d\cdot 1_Y\in H^0(Y)$, and if $d$ is a unit in
$\mathbb A$, then $d^{-1}\cdot\theta$ is a bivariant class of degree
one. Moreover, let $i:W\subseteq Y$ be a non-empty subspace of $Y$,
and let $g:f^{-1}(W)\to W$ be the restriction of $f$ on $f^{-1}(W)$.
Denote by $\theta'=i^*(\theta)\in H^0(f^{-1}(W)\stackrel{g}\to W)$
the pull-back of $\theta$. By \cite[(G2), $(ii)$, p. 26]{FultonCF}, we
see that $i^*\theta_0(1_X)=\theta'_0j^*(1_X)$, where
$j:f^{-1}(W)\subseteq X$ denotes the inclusion. Therefore,
$1_W=\theta'_0(1_{f^{-1}(W)})\in H^0(W)$. This proves that the
pull-back of a bivariant class of degree one, is again of degree
one. And, conversely, if $Y$ is path-connected, and  $\theta'$ is of
degree one, then also $\theta$ is of degree one.

\smallskip
$(iii)$ Assume that $f:X\to Y$ is a projective, locally complete
intersection morphism between complex irreducible quasi-projective
varieties, and that $f$ is birational (e.g. $f$ is the blowing-up of
$Y$ at a locally complete intersection subvariety $W\subset Y$
\cite[p. 114]{FultonIT}). Let $\theta\in H^0(X\stackrel{f}\to Y)$ be
the {\it orientation class} of $f$ \cite[p. 114]{FultonIT}, \cite[p.
131]{FultonCF}. Then $\theta$ has degree one. In fact, let $U$ be a
non-empty Zariski open set of $Y$, such that $f$ induces an
isomorphism $f^{-1}(U)\cong U$. Let $\theta'$ be the restriction of
$\theta$ on $f^{-1}(U)\to U$. Since $\theta'$ is the orientation
class of $f^{-1}(U)\to U$ \cite[Lemma 19.2, (a), p. 379]{FultonIT},
and $f^{-1}(U)\cong U$, it follows that $\theta'$ has degree one. By
remark $(ii)$ above,  also $\theta$ has degree one . Compare with
\cite[p. 137]{BFMP} and \cite[p. 12]{Verdier}.

\smallskip
$(iv)$ If $Y$ is a quasi-projective {$\mathbb A$-homology manifold},
and $f: X\rightarrow Y$ is a {resolution of singularities}  of $Y$,
then there exists a {unique} bivariant class $\theta\in
Hom_{D^{b}_{c}(Y)}(Rf_*\mathbb A_X, \mathbb A_Y)$ of degree one. See
Theorem \ref{homology} above.

\smallskip
$(v)$ Let $f:X\to Y$ be a projective map between irreducible,
complex quasi-projective varieties of the same dimension $n$. Assume
that $Y$ is smooth (or, more generally, that $Y$ is an $\mathbb
A$-homology manifold). In this case one has (compare with
\cite[3.1.4, p. 34]{FultonCF}, \cite[Lemma 2, p. 217]{FultonYT}, and
the proof of Theorem \ref{homology} below):
$$
H^0(X\stackrel{f}\to Y)\cong H_{2n}(X)\cong H^0(X).
$$
By remark $(i)$ above, if there exists a bivariant class of degree
one for $f$, then, for every $k$,  $H^k(Y)$ is contained, via
pull-back, in $H^k(X)$. Therefore, if $\mathbb A=\mathbb Z$ and
$h^k(Y)>h^k(X)$ for some $k$, then it  happens that
$H^0(X\stackrel{f}\to Y)\neq 0$, but $\theta_0=0$, for every
bivariant class $\theta$.  However, if, in addition, $f$ is
birational, then the bivariant class $\theta$ corresponding to
$1_X\in H^0(X)$ is a bivariant class of degree one. In fact, if $U$
is a Zariski open subset of $Y$ such that $f^{-1}(U)\cong U$, the
restriction of $\theta$ on $f^{-1}(U)\to U$ has degree one. Observe
that, if $Y$ is singular, it is no longer true. For instance, let
$C\subset \mathbb P^3$ be a projective non-singular curve of genus
$\geq 1$. Let $Y\subset \mathbb P^4$ be the cone over $C$, and let
$f:X\to Y$ be the blowing-up of $Y$ at the vertex. Then one has
$H^0(X\stackrel{f}\to Y)\neq 0$, but there is no a bivariant class
of degree one of $f$. This is a consequence of Theorem
\ref{homology}. For more details, see Remark \ref{remfin}, $(iii)$.

\smallskip
$(vi)$ Let $f:X\to Y$ be a projective map between irreducible
quasi-projective varieties. Assume there exists a bivariant class
$\theta$ of degree one. Put $n=\dim X$, and $m=\dim Y$. Since
$f_*\circ \theta^*={\rm{id}_{H_{\bullet}(Y)}}$, the push-forward map
$f_*$ induces an inclusion $H_{\bullet}(Y)\subseteq H_{\bullet}(X)$.
It follows that $m\leq n$. Moreover, $f$ is surjective, otherwise
the push-forward $f_*:H_{2m}(X)\to H_{2m}(Y)$ vanishes. Since
restricting $\theta$ to some special fibre, we obtain again a
bivariant class of degree one, in general it may happen that $n>m$.
It is clear that, if $n=m$, then $f$ is birational.
\end{remark}

\bigskip
\section{Bivariant class of degree one and decompositions.}

\bigskip
In this section we are going to prove Theorem \ref{Crossintro} stated
in the Introduction.

To this purpose, we need some preliminaries.
The first one is the following lemma.

\begin{lemma}\label{l1} Let $\mathcal T$ be a triangulated category, and  $f^*\in
{\rm{Hom}}_{\mathcal T}(A, B)$ be a morphism in $\mathcal T$. Assume
that $f^*$ if left-invertible, i.e. that there exists $\theta \in
{\rm{Hom}}_{\mathcal T}(B, A)$ such that $\theta\circ f^*=1_A$. Then
we have $B\cong A\oplus C$ for some $C\in Ob (\mathcal T)$.
\end{lemma}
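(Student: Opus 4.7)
The plan is to embed $f^*$ in a distinguished triangle and then exploit the hypothesis $\theta \circ f^* = 1_A$ to force this triangle to split. By the axiom (TR1) of triangulated categories, one may complete $f^*$ to a distinguished triangle
$$
A \stackrel{f^*}{\longrightarrow} B \stackrel{g}{\longrightarrow} C \stackrel{h}{\longrightarrow} A[1].
$$
The object $C$ will turn out to be a complement of $A$ in $B$.

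To show that this triangle is split, I would apply the cohomological functor $\mathrm{Hom}_{\mathcal{T}}(-,A)$, obtaining the long exact sequence
$$
\cdots \longrightarrow \mathrm{Hom}_{\mathcal{T}}(B, A) \stackrel{\cdot\, \circ\, f^*}{\longrightarrow} \mathrm{Hom}_{\mathcal{T}}(A, A) \stackrel{\delta}{\longrightarrow} \mathrm{Hom}_{\mathcal{T}}(C, A[1]) \longrightarrow \cdots ,
$$
where the connecting morphism $\delta$ is given by $\delta(\alpha) = \alpha[1] \circ h$; in particular $\delta(1_A) = h$. The assumption $\theta \circ f^* = 1_A$ says precisely that $1_A$ lies in the image of the map $\cdot \circ f^*$, hence, by exactness, in the kernel of $\delta$. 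We therefore conclude that $h = 0$.

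It then remains to invoke the standard lemma that, in any triangulated category, a distinguished triangle with vanishing third morphism is isomorphic to the split triangle $A \to A \oplus C \to C \stackrel{0}{\to} A[1]$; this is a routine consequence of axiom (TR3) together with a short diagram chase. The resulting isomorphism yields the desired decomposition $B \cong A \oplus C$. The argument presents no genuine obstacle: the only two points requiring a bit of care are the explicit identification of $\delta$ with precomposition by $h$ (which makes $\delta(1_A)=h$ unambiguous) and the routine verification that $h=0$ forces a split triangle.
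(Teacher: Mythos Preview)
Your proof is correct and follows essentially the same strategy as the paper: complete $f^*$ to a distinguished triangle, show that the connecting morphism $h:C\to A[1]$ vanishes, and then invoke the standard splitting lemma for triangles with zero connecting map. The only difference is cosmetic: you deduce $h=0$ from the long exact sequence of $\mathrm{Hom}_{\mathcal T}(-,A)$, whereas the paper uses TR3 to build a morphism of triangles $(1_A,\theta,\cdot)$ from the given triangle to the trivial triangle $A\stackrel{1_A}{\to}A\to 0$, and then rotates via TR2 to see that $h$ factors through~$0$.
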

\begin{proof}[Proof of Lemma \ref{l1}]
The axiom TR1 $(iii)$ of triangulated categories implies that $f^*$
can be completed to a distinguished triangle
$$A \stackrel{f^*}{\longrightarrow} B \longrightarrow C $$
\cite[p. 12]{Huy}. Thus, combining the hypothesis $\theta\circ
f^*=1_A$ with axioms TR1 and TR3, we have a commutative diagram of
distinguished triangles
\[
\xymatrix{
A \ar[r]^{f^*} \ar[d]^{1_{A}}& B \ar[d]^{\theta} \ar[r]& C  \ar[d]^{}\\
A  \ar[r]^{1_A} & A \ar[r]  & 0. }
\]
The axiom TR2 provides also the following commutative diagram of
distinguished triangles
\[
\xymatrix{
C \ar[r]^{\delta} \ar[d]& A[1] \ar[d]^{1_{A[1]}} \ar[r]& B[1]  \ar[d]^{\theta[1]}\\
0  \ar[r] & A[1] \ar[r]  & A[1], }
\]
from which we argue that $\delta$ vanishes.  We conclude at once by
\cite[Exercise 1.38]{Huy}.
\end{proof}

\smallskip
We are in position to prove that $(i)$ is equivalent to $(iii)$ in
Theorem \ref{Crossintro}.

To this purpose, first assume there exists a bivariant class
$\theta: Rf_*\mathbb A_X\to \mathbb A_Y$ of degree one, and let
$f^*:\mathbb A_Y\to Rf_*\mathbb A_X$ be the pull-back morphism. By
Remark \ref{remintro}, $(i)$, we know that $\theta\circ
f^*=1_{\mathbb A_Y}$. Therefore, we may apply previous Lemma
\ref{l1}, with $\mathcal T=D^b_c(Y)$, $A=\mathbb A_Y$,
$B=Rf_*\mathbb A_X$, with the morphism $f^*$ as the pull-back, and
$\theta$ as the given bivariant class. It follows a decomposition
like $Rf_*\mathbb A_X \cong \mathbb A_Y \oplus \mathcal K$.

Conversely, suppose there exists a decomposition $Rf_*\mathbb A_X
\cong \mathbb A_Y \oplus \mathcal K$. By projection, it induces a
bivariant class $\eta: Rf_*\mathbb A_X \to \mathbb A_Y$. Since the
restriction $\eta'$ of $\eta$ on $U$ is an automorphism of ${\mathbb
A}_U$, and $U$ is nonempty, it follows that $\eta'_0(1_U)=d\cdot
1_U\in H^0(U)$,  with some unit $d\in\mathbb A$. Therefore,
$d^{-1}\cdot \eta$ is a bivariant class of degree one (compare with
Remark \ref{remintro}, $(ii)$).

This concludes the proof that $(i)$ is equivalent to $(iii)$ in
Theorem \ref{Crossintro}.

\medskip
\begin{remark}\label{nonempty}
In order to prove that $(i)$ implies $(iii)$, we do not need the
existence of $U$.
\end{remark}

Now we are going to prove that $(i)$ is equivalent to $(ii)$.

\smallskip
Observe that the same argument we just used to prove that $(iii)$
implies $(i)$,  proves that $(ii)$ implies $(i)$. In fact, suppose
there exists a decomposition $Rf_*\mathbb A_X\oplus \mathbb A_W\cong
Rf_*\mathbb A_{\widetilde W}\oplus \mathbb A_Y$. By projection, it
induces a bivariant class $\eta: Rf_*\mathbb A_X \to \mathbb A_Y$.
Since both $\mathbb A_W$ and $Rf_*\mathbb A_{\widetilde W}$ are
supported on $W$, the restriction of $\eta$ on $U$ is an
automorphism of ${\mathbb A}_U$. And now we may conclude as before.

\smallskip
In order to conclude the proof of Theorem \ref{Crossintro}, we only have
to prove that $(i)$ implies $(ii)$. Also in this case, we need some
preliminaries.

Consider the following natural commutative diagram
\begin{equation}\label{diagram}
\xymatrix{
\widetilde{W} \ar[d]^g \ar[r]^j& X \ar[d]^f &U \ar[l]_{\partial_X} \ar[d]^1\\
W \ar[r]^i &Y  & U \ar[l]_{\partial_Y} }
\end{equation}
where $g:\widetilde W\to W$ denotes the restriction of $f$,  and the
other maps are the inclusions. Denote by $\ac$ (resp. $\bc$)  the
full subcategory of ${\text{Sh}}(X)$ (resp. ${\text{Sh}}(Y)$)
supported on $U$.
\begin{lemma}\label{l2}
\label{equivalence} On the category ${\rm{Sh}}(U)$ we have $f_*
\circ
\partial_{X\, !}=\partial_{Y\, !}$. Furthermore, $f_*$ is an exact
equivalence between $\ac$ and $\bc$, whose inverse is the pull-back
$f^*$.
\end{lemma}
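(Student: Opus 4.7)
The plan is to exploit the fact that the right-hand square in diagram \eqref{diagram} is Cartesian — since $f$ induces a homeomorphism $f^{-1}(U)\cong U$, we have $f^{-1}(\partial_Y(U))=\partial_X(U)$ — and to combine this with the exactness of extension by zero and the properness of $f$. I would first establish $f_*\circ\partial_{X,!}=\partial_{Y,!}$ on $\mathrm{Sh}(U)$, and then use it, together with the analogous identity $f^*\circ\partial_{Y,!}=\partial_{X,!}$, to realise the equivalence $\ac\cong\bc$.

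For the first identity, since $\partial_X$ and $\partial_Y$ are open embeddings the functors $\partial_{X,!}$ and $\partial_{Y,!}$ are already exact at the sheaf level, and since $f$ is proper $f_!=f_*$. Functoriality of the lower-shriek along the composition $\partial_Y=f\circ\partial_X$ then gives $\partial_{Y,!}=f_!\circ\partial_{X,!}=f_*\circ\partial_{X,!}$. A direct stalk verification is equally short: for $y\in U$ both stalks equal $\mathcal{F}_y$, while for $y\in W$ the sheaf $\partial_{X,!}\mathcal{F}$ vanishes on the whole fibre $f^{-1}(y)\subset\widetilde W$, so proper base change gives zero on both sides.

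Next I would observe that $\partial_{X,!}$ realises an exact equivalence $\mathrm{Sh}(U)\simeq\ac$, with quasi-inverse $\partial_X^*$: any sheaf on $X$ supported in $U$ is canonically recovered from its restriction by extension by zero, and the unit and counit of $(\partial_{X,!},\partial_X^*)$ are isomorphisms on the relevant subcategories. Analogously, $\partial_{Y,!}$ gives $\mathrm{Sh}(U)\simeq\bc$. The identity from the previous step then produces a commutative triangle
\[
\xymatrix@R=1em{ & \mathrm{Sh}(U)\ar[dl]_{\partial_{X,!}}\ar[dr]^{\partial_{Y,!}} & \\ \ac\ar[rr]^{f_*} & & \bc, }
\]
which exhibits $f_*|_{\ac}$ as a composition of two exact equivalences, hence itself an exact equivalence.

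To identify $f^*$ as the quasi-inverse, I would prove $f^*\circ\partial_{Y,!}=\partial_{X,!}$ by a stalk check: since $f^*$ preserves stalks, $(f^*\partial_{Y,!}\mathcal{F})_x=(\partial_{Y,!}\mathcal{F})_{f(x)}$ equals $\mathcal{F}_{f(x)}$ for $x\in f^{-1}(U)=\partial_X(U)$ and vanishes otherwise, matching $\partial_{X,!}\mathcal{F}$. This shows $f^*$ sends $\bc$ into $\ac$ and fits into the analogous triangle, making it an equivalence going the other way. The only mildly subtle point, and the main obstacle, is that this equivalence must be realised by the canonical unit and counit of the adjunction $(f^*,f_*)$ — this is handled by a stalk-level check using proper base change and the fact that every fibre of $f$ over $U$ is a single point.
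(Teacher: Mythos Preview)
Your proof is correct and more categorical than the paper's. For the identity $f_*\circ\partial_{X,!}=\partial_{Y,!}$ the paper works directly at the level of sections: for an open $V\subseteq Y$ it compares
\[
f_*(\partial_{X!}\fc)(V)=\{s\in\Gamma(f^{-1}(V)\cap U,\fc)\mid \text{supp}(s)\text{ closed in }f^{-1}(V)\}
\]
with $\partial_{Y!}(\fc)(V)$, getting one inclusion from continuity and the other from the fact that a proper map over a relatively compact $V$ is closed. You instead invoke the general functoriality $(f\circ\partial_X)_!=f_!\circ\partial_{X,!}$ together with $f_!=f_*$ for proper $f$, which packages exactly the same properness input but avoids the explicit support computation. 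For the equivalence, the paper cites \cite[Proposition 6.4]{Iversen} to identify both $\ac$ and $\bc$ with $\mathrm{Sh}(U)$ and then proves exactness of $f_*|_{\ac}$ separately (left-exactness is general; right-exactness is deduced from the first identity and the exactness of $\partial_{Y,!}$). Your triangle argument is slicker: by exhibiting $f_*|_{\ac}$ as $\partial_{Y,!}\circ\partial_X^*$ you get exactness for free, since both factors are exact equivalences. The one place where you could be a bit more explicit is the identification of $f^*$ with the quasi-inverse: your stalk check is fine, but the underlying natural transformation you are checking is the base-change isomorphism $f^*\partial_{Y,!}\cong\partial_{X,!}$ for the Cartesian square with $f^{-1}(U)\cong U$, and saying so would remove the ``mildly subtle point'' you flag at the end.
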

\begin{proof}
First we prove that $f_* \circ    \partial_{X\, !}=\partial_{Y\, !}$
on ${\text{Sh}}(U)$.

Let $\fc$ be a sheaf on $U$   and let $V\subseteq Y$ be an open
subset. By \cite[Definition 6.1, p. 106]{Iversen}, we have
$$f_*(\partial_{X !}(\fc))(V)= \{s\in \Gamma\left( f^{-1}(V) \cap U, \fc\right) \mid  \,\, \text{supp($s$) is closed in} \,\, f^{-1}(V)\}$$
and
$$\partial_{Y !}(\fc)(V)= \{s\in \Gamma\left( V\cap U, \fc\right) \mid  \,\, \text{supp($s$) is closed in} \,\, V\}.$$
Since $f$ is continuous, we have $\partial_{Y !}(\fc)(V)\subseteq
f_*(\partial_{X !}(\fc))(V)$. Hence, $ \partial_{Y !}(\fc)$ is a
subsheaf of  $f_*(\partial_{X !}(\fc))$.
As for the opposite inclusion, we argue as follows. By the local
compactness of $Y$, we can assume that the closure of $V$ is compact
in $Y$. Fix $s\in f_*(\partial_{X !}(\fc))(V)$, and set $C:=
{\rm{supp}}(s)$, so that $C$ is closed in $f^{-1}(V)$. It suffices
to prove that $f(C)$, which is homeomorphic to $C$, is closed in
$V$. Since  $f$ is a proper morphism, $f^{-1}(\overline V )$ is
compact and  the map $f^{-1}(\overline V ) \rightarrow \overline V$
is closed. Then we have
$$C=f(C)= f(\overline C \cap f^{-1}(V))=f(\overline C) \cap V $$
and we are done.

We are left with the proof that $f_*$ induces an exact equivalence
between $\ac$ and $\bc$. By \cite[Proposition 6.4, p. 107]{Iversen},
we already know that $f_*$ induces an equivalence between $\ac$ and
$\bc $, whose inverse is the pull-back.
As for the exactness, $f_*$ first of all is left-exact by \cite[p.
97]{Iversen}. Now, consider an exact sequence of sheaves in $\ac$:
$\dc \rightarrow \hc \rightarrow 0$. By \cite[Proposition 6.4, p.
107]{Iversen}, we can assume
$\dc= \partial_{X !}\dc _U, \,\,\,\, \hc= \partial_{X !}\hc _U$,
for suitable and well determined sheaves $\dc _U, \, \hc _U \in
{\rm{Sh}}\, (U)$. Therefore, taking into account we just proved that
$f_* \circ
\partial_{X\, !}=\partial_{Y\, !}$,
by \cite[(6.3) p. 106]{Iversen} we deduce
\[
\xymatrix{
f_*\partial_{X !}\dc _U \ar[d] _= \ar[r]& f_*\partial_{X !}\hc _U  \ar[d]^= \ar[r] & 0  \ar[d]\\
\partial_{Y !}\dc _U \ar[r] & \partial_{Y !}\hc _U \ar[r] & 0
}
\]
and we are done.
\end{proof}

\begin{lemma}\label{l3}
Consider a triangulated category $\tc$,  and two  commutative
diagram of distinguished triangles in $\tc$
$$
\xymatrix{
A  \ar[r]^{\partial} & B_1  \ar[r]& C_1  \\
A \ar[u]^{1_A} \ar[r]^{\partial} &B \ar[u]^{f^*} \ar[r]  & C
\ar[u]^{g^*}} \quad\quad \xymatrix{
A \ar[r]^{\partial} \ar[d]_{1_A}& B_1 \ar[r] \ar[d]_{\theta}& C_1  \ar[d]_{\eta}\\
A  \ar[r]^{\partial} & B \ar[r]  & C.}
$$
Assume moreover that $\theta\circ f^*=1_{B}$,  $\eta\circ
g^*=1_{C}$, and that ${\rm{Hom}}_{\tc}(A, C_1[-1])=0$. Then we have
a \lq\lq cross\rq\rq \,isomorphism
$$
B_1\oplus C\cong B\oplus C_1.
$$
\end{lemma}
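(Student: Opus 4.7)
The plan is to reduce the cross isomorphism to two applications of Lemma~\ref{l1} by producing, via the octahedral axiom, a common complement for the two splittings.

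The commutativity of the first diagram implies that the morphism $A\to B_1$ of the top triangle factors as $A\stackrel{\partial}{\to} B \stackrel{f^*}{\to} B_1$. I would complete $f^*$ to a distinguished triangle $B\stackrel{f^*}{\to}B_1 \to K \to B[1]$ and apply the octahedral axiom to this composition. This produces a fourth distinguished triangle
$$
C \stackrel{\tilde g}{\to} C_1 \to K \to C[1]
$$
together with the compatibility $\tilde g\circ p = p'\circ f^*$, where $p:B\to C$ and $p':B_1\to C_1$ denote the second morphisms of the two given triangles.

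The crucial step is to show that $\tilde g = g^*$. The commutativity of the first diagram gives $p'\circ f^* = g^*\circ p$, and hence $(\tilde g - g^*)\circ p = 0$. Applying ${\rm{Hom}}_{\tc}(-,C_1)$ to the lower triangle produces an exact sequence showing that any such morphism lies in the image of the connecting map from ${\rm{Hom}}_{\tc}(A[1],C_1) = {\rm{Hom}}_{\tc}(A,C_1[-1])$; by hypothesis, this group vanishes, so $\tilde g = g^*$. Thus the octahedral triangle becomes $C\stackrel{g^*}{\to} C_1 \to K \to C[1]$.

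Finally, Lemma~\ref{l1} applies to the two distinguished triangles $B \stackrel{f^*}{\to} B_1\to K$ and $C \stackrel{g^*}{\to} C_1 \to K$, the left-inverses being $\theta$ and $\eta$ respectively; this yields $B_1\cong B\oplus K$ and $C_1\cong C\oplus K$. Taking direct sums with $C$ and $B$ respectively gives the desired cross isomorphism $B_1\oplus C \cong B\oplus K\oplus C \cong B\oplus C_1$. The main obstacle is the identification $\tilde g = g^*$, which is precisely where the hypothesis ${\rm{Hom}}_{\tc}(A,C_1[-1]) = 0$ is indispensable: without it the octahedral map could differ from $g^*$ by a morphism factoring through the connecting morphism $C\to A[1]$.
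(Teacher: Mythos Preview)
Your proof is correct and follows essentially the same route as the paper's: apply the octahedral axiom to the factorization $A\stackrel{\partial}{\to}B\stackrel{f^*}{\to}B_1$, use the vanishing of ${\rm Hom}_{\tc}(A,C_1[-1])$ to identify the resulting map $\tilde g$ with $g^*$, and then invoke Lemma~\ref{l1} twice to obtain the splittings $B_1\cong B\oplus K$ and $C_1\cong C\oplus K$. The paper organizes the bookkeeping slightly differently---it introduces the cone $C_2$ of $g^*$ separately and then argues $B_2\cong C_2$ via uniqueness of cones, citing \cite[Prop.~1.1.9]{BBD} for the identification $\gamma=g^*$---but this is the same argument as your direct long-exact-sequence computation.
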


\begin{remark}\label{injectives}
If the category $\tc$ is the derived category of an abelian category
$\ac$ with enough injectives (e.g. $D^b_c(Y)$), and
$A\in{\text{Ob}}(\ac)$, and $C_1$ is a complex in degree $\geq 0$,
then the assumption ${\rm{Hom}}_{\tc}(A, C_1[-1])=0$ is verified.
\end{remark}

\begin{proof}[Proof of Lemma \ref{l3}]
Consider the following commutative diagram:
$$\xymatrix{
A \ar[r]^{1_A} \ar[d]^{\partial} &A \ar[r] \ar[d]^{\partial}&0 \ar[r] \ar[d]^{}&A\left[ 1 \right] \ar[d]\\
B \ar[r]^ {f^{*}} \ar[d] &B_1 \ar[r] \ar[d] & B_2 \ar[r] \ar[d] &B\left[ 1 \right] \ar[d]\\
C \ar[r]^{g^{*}} \ar[d] &C_1 \ar[r] \ar[d] & C_2 \ar[r] \ar[d] &C\left[ 1 \right] \ar[d]\\
A\left[ 1 \right] \ar[r]^{1_A[1]} &A\left[ 1 \right] \ar[r] &0
\ar[r] &A\left[ 2 \right]}
$$
where the first and second columns are the ones given in the
hypothesis, and the fourth column is obtained by the first one by
means of TR2. The first row, which gives the fourth one by means of
TR2, is given by TR1. The second and third rows are given by
completion of $f^{*}$ and $g^{*}$, respectively, by means of TR1.
Lastly, the arrows in the third column are given by TR3. Observe
that the third column, a priori, is not  a distinguished triangle.

Since $\theta\circ f^*=1_{B}$ and  $\eta\circ g^*=1_{C}$, by Lemma
\ref{l1} and its proof, we know that $B_1\cong B\oplus B_2$, and
that $C_1\cong C\oplus C_2$. Therefore, it suffices to prove that
$B_2\cong C_2$. To this purpose, we are going to use TR4 \cite[p.
11]{Dimca2} as follows.

Corresponding to the composition $A \stackrel
{\partial}{\rightarrow} B \rightarrow B_1$ at the top left square in
the diagram, and to the distinguished triangles given by the first
column, the second row, and the second column, TR4 says there exist
a distinguished triangle
\begin{equation}\label{secondtriangle}
C \stackrel{\gamma}\rightarrow C_1 \rightarrow B_2 \rightarrow C
\left[ 1 \right]
\end{equation}
and a triangle morphism:
$$
\xymatrix{
A \ar[r] \ar[d]^{=} &B \ar[r] \ar[d]^{f^*} & C \ar[r] \ar[d]^{\gamma} &A \left[ 1 \right] \ar[d]^{=}\\
A \ar[r] &B_1 \ar[r] &C_1 \ar[r] &A \left[ 1 \right].}
$$
The same diagram appears in our assumptions, with $g^{*}$ instead of
$\gamma$. It follows that $g^{*}=\gamma$, because
${\rm{Hom}}_{\tc}(A, C_1[-1])=0$ \cite[Proposition 1.1.9., p.
23]{BBD}. Now, comparing (\ref{secondtriangle}) with the third row
of the diagram at the beginning of the proof, we see that $B_2\cong
C_2$, because the third object in a distinguished triangle is
unique, up to isomorphism.
\end{proof}

We are in position to prove that $(i)$ implies $(ii)$ in Theorem
\ref{Crossintro}. We keep the notations introduced in the diagram
(\ref{diagram}).

First notice that the pull-back induces a natural commutative
diagram of distinguished triangles in $D^b_c(Y)$ \cite[p.
46]{Dimca2}:
\begin{equation}\label{pb}
\xymatrix{
Rf_*(\partial_{X\, !}\mathbb A_U)  \ar[r]^{\partial_X} & Rf_*\mathbb A_X  \ar[r]^{j^*}& Rf_*\mathbb A_{\widetilde W}  \\
\partial_{Y\, !}\mathbb A_U \ar[u]^{1} \ar[r]^{\partial_Y} &\mathbb
A_Y \ar[u]^{f^*} \ar[r]^{i^*}  & \mathbb A_W. \ar[u]^{g^*}}
\end{equation}
In view of Lemma \ref{l2}, the vertical map $\partial_{Y\, !}\mathbb
A_U \stackrel {1}{\longrightarrow} Rf_*(\partial_{X\, !}\mathbb
A_U)$  on the left is an isomorphism in $D^b_c(Y)$. Now consider the
following diagram:
\[
\xymatrix{ Rf_*(\partial_{X\, !}\mathbb A_U)  \ar[r]^{\partial_X}
\ar[d]_{1}&
Rf_*\mathbb A_X \ar[r]^{j^*} \ar[d]_{\theta}& Rf_*\mathbb A_{\widetilde W}  \\
\partial_{Y\, !}\mathbb A_U  \ar[r]^{\partial_Y} & \mathbb
A_Y  \ar[r]^{i^*}  & \mathbb A_W.}
\]
Since the pull-back diagram is commutative, and $\theta$ has degree
one (so $\theta\circ f^*=1_{\mathbb A_Y}$), it follows that previous
square commutes. In fact:
$$
\theta\circ\partial_X=\theta\circ\left(f^*\circ\partial_Y\circ
1\right)=\left(\theta\circ f^*\right)\circ\partial_Y\circ
1=1_{\mathbb A_Y}\circ\partial_Y\circ 1=\partial_Y\circ 1.
$$
Then, by axiom TR3,  previous diagram extends to a \lq\lq
Gysin\rq\rq \, morphism of triangles, induced  by the bivariant
class $\theta$:
\begin{equation}\label{Gy}
\xymatrix{ Rf_*(\partial_{X\, !}\mathbb A_U)  \ar[r]^{\partial_X}
\ar[d]_{1}&
Rf_*\mathbb A_X \ar[r]^{j^*} \ar[d]_{\theta}& Rf_*\mathbb A_{\widetilde W}   \ar[d]_{\eta}\\
\partial_{Y\, !}\mathbb A_U  \ar[r]^{\partial_Y} & \mathbb
A_Y  \ar[r]^{i^*}  & \mathbb A_W.}
\end{equation}
In this diagram, by \cite[loc. cit.]{BBD} (compare with Remark
\ref{injectives}), the morphism $\eta$ is unique. For the same
reason, since composing this diagram with the diagram induced by the
pull-back, we get the identity on both $\partial_{Y\, !}\mathbb A_U$
and $\mathbb A_Y $, we also have $\eta\circ g^*=1_{\mathbb A_W}$. At
this point, it is clear that the decomposition appearing in $(ii)$
follows from Lemma \ref{l3} and Remark \ref{injectives}. This
concludes the proof of Theorem \ref{Crossintro}.

\begin{remark}\label{eta}
Bivariant Theory provides a pull-back morphism $\eta_1:=i^*(\theta)$
\cite[(3), p. 19]{FultonCF}, with:
$$
\eta_1:Rf_*\mathbb A_{\widetilde W}\to \mathbb A_W.
$$
We are not able to prove that $\eta=\eta_1$, i.e. that the Gysin
diagram, with $\eta_1$ instead of $\eta$, commutes. However, we will
prove, later, that $\eta$ and $\eta_1$ induce the same morphism  in
(co)homology. Notice that also $\eta_1$ has degree one, and
therefore we also  have $\eta_1\circ g^*=1_{\mathbb A_W}$.
Therefore, if a morphism of degree one was unique, then
$\eta=\eta_1$.
\end{remark}

\bigskip
\section{Consequences for the (co)homology.}
Keep the same assumption of Theorem \ref{Crossintro}, and suppose there
is a bivariant class of degree one for $f$. Then we have a cross
isomorphism $Rf_*\mathbb A_X\oplus \mathbb A_W\cong Rf_*\mathbb
A_{\widetilde W}\oplus \mathbb A_Y$. Taking hypercohomology
(hypercohomology with compact support resp.), we deduce isomorphisms
in cohomology (Borel-Moore homology resp.):
$$
H^{\bullet}(X)\oplus H^{\bullet}(W)\cong H^{\bullet}(\widetilde
W)\oplus H^{\bullet}(Y),\quad\quad H_{\bullet}(X)\oplus
H_{\bullet}(W)\cong H_{\bullet}(\widetilde W)\oplus H_{\bullet}(Y).
$$
Using the triangle morphisms (\ref{pb}) and (\ref{Gy}), we may
explicit this isomorphisms as follows.

First, taking hypercohomology \cite[p. 46]{Dimca2}, the triangle
morphisms (\ref{pb}) and (\ref{Gy}) induce  commutative diagrams
with exact rows:
$$
\xymatrix{ H^k(X,\widetilde W) \ar[r] \ar[d]  & H^k(X) \ar[r]^{j^*}
& H^k(\widetilde W) \ar[r]^{\partial_X}
&H^{k+1}(X,\widetilde W) \ar[d]^{}\\
H^k(Y,W)  \ar[r] \ar[u]^{=}  &H^k(Y) \ar[r]^{i^*} \ar[u]^{f^*}
&H^k(W) \ar[r]^{\partial_Y} \ar[u]^{g^*} &H^{k+1}(Y, W) \ar[u]^{=}}
$$
and
$$
\xymatrix{ H^k(X,\widetilde W) \ar[r] \ar[d]^{=} & H^k(X)
\ar[r]^{j^*} \ar[d]^{\theta_*} & H^k(\widetilde W)
\ar[r]^{\partial_X} \ar[d]^{\eta_*}
&H^{k+1}(X,\widetilde W) \ar[d]^{=}\\
H^k(Y,W) \ar[r] \ar[u]^{}&H^k(Y) \ar[r]^{i^*} &H^k(W)
\ar[r]^{\partial_Y} &H^{k+1}(Y, W) \ar[u]^{}}
$$
for every $k\in\mathbb Z$. Since these diagrams commute, and
$\theta_*\circ f^*={\text{id}}_{H^{\bullet}(Y)}$ and $\eta_*\circ
g^*={\text{id}}_{H^{\bullet}(W)}$, a chase diagram shows that the
sequence:
$$
0\to H^k(X)\stackrel{\alpha^*}\to H^k(\widetilde W)\oplus
H^k(Y)\stackrel{\beta^*}\to H^k(W)\to 0,
$$
with
$$
\alpha^*(x):=(j^*(x),\, -\theta_*(x)), \quad \beta^*(\widetilde
w,\,y):=\eta_*(\widetilde w)+i^*(y),
$$
is exact (compare with \cite[Proposition 6.7, (e), p.
114-115]{FultonIT}). Moreover, the map
$$
w\in H^k(W) \to (g^*(w),0)\in H^k(\widetilde W)\oplus H^k(Y)
$$
is a right section for the sequence, and so we get an explicit
isomorphism:
\begin{proposition}\label{crossexpl}
The map
$$
\varphi^*:H^k(X)\oplus H^k(W)\to H^k(\widetilde W)\oplus H^k(Y),
$$
with
$$
\varphi^*(x,\, w):=(j^*(x)+g^*(w),\,-\theta_*(x)),
$$
is an isomorphism.
\end{proposition}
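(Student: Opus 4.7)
The plan is to invoke the standard splitting lemma applied to the short exact sequence
$$
0\to H^k(X)\stackrel{\alpha^*}\to H^k(\widetilde W)\oplus H^k(Y)\stackrel{\beta^*}\to H^k(W)\to 0
$$
established in the discussion preceding the statement, together with the right section $\sigma^*:H^k(W)\to H^k(\widetilde W)\oplus H^k(Y)$ sending $w\mapsto (g^*(w),0)$. The proposed map $\varphi^*$ coincides with $(x,w)\mapsto \alpha^*(x)+\sigma^*(w)$, since
$$
\alpha^*(x)+\sigma^*(w)=(j^*(x),-\theta_*(x))+(g^*(w),0)=(j^*(x)+g^*(w),-\theta_*(x)).
$$
Thus the first thing I would check is that $\sigma^*$ really is a right section of $\beta^*$: this amounts to the identity $\eta_*\circ g^*={\text{id}}_{H^k(W)}$, which in turn follows from the relation $\eta\circ g^*=1_{\mathbb A_W}$ read off from the Gysin diagram (\ref{Gy}) constructed in the proof of Theorem \ref{Crossintro}.

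Having this, the direct-sum decomposition is a formal consequence. For injectivity, suppose $\varphi^*(x,w)=0$. Applying $\beta^*$ and combining $\beta^*\circ\alpha^*=0$ with $\beta^*\circ\sigma^*={\text{id}}$ yields $w=0$; then $\alpha^*(x)=0$, and injectivity of $\alpha^*$ forces $x=0$. For surjectivity, given $(\widetilde w,y)\in H^k(\widetilde W)\oplus H^k(Y)$, I would set $w:=\beta^*(\widetilde w,y)=\eta_*(\widetilde w)+i^*(y)\in H^k(W)$. Then $(\widetilde w,y)-\sigma^*(w)$ lies in $\ker\beta^*=\text{Im}\,\alpha^*$ by exactness, so there is $x\in H^k(X)$ with $\alpha^*(x)=(\widetilde w,y)-\sigma^*(w)$, giving $\varphi^*(x,w)=(\widetilde w,y)$.

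No serious obstacle arises here: the genuine content is entirely contained in the diagram chase that produced both the short exact sequence and the right section, and the present statement is simply the rearrangement of that data into an explicit isomorphism. The only non-automatic point to double-check is that the section property $\beta^*\circ\sigma^*={\text{id}}$ indeed holds on the nose (and not merely up to some scalar), which is where the degree-one condition on $\eta$ inherited from $\theta$ via (\ref{Gy}) is used.
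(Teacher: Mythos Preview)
Your proof is correct and follows exactly the paper's approach: the paper establishes the short exact sequence and the right section $w\mapsto(g^*(w),0)$ in the paragraph immediately preceding the proposition, then simply states ``and so we get an explicit isomorphism,'' leaving the splitting-lemma verification implicit. You have merely spelled out that verification; the content and the key ingredient (namely $\eta_*\circ g^*={\rm id}$, inherited from $\eta\circ g^*=1_{\mathbb A_W}$) are identical.
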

We may interpret the map $\varphi^*$ as a matrix product (compare
with \cite[p. 328]{Jouanolou}):
$$
\left[\begin{matrix} \widetilde w \\
y
\end{matrix}\right]=\left[\begin{matrix} j^* & g^*\\
-\theta_* & 0
\end{matrix}\right]\cdot \left[\begin{matrix} x \\
w
\end{matrix}\right].
$$
Since
$$
\varphi^*(-f^*y,\,i^*y)=(0,y),
$$
the matrix defining the inverse map $({\varphi^*})^{-1}$ has the
following form:
$$
\left[\begin{matrix} x \\
w
\end{matrix}\right]=\left[\begin{matrix} \lambda_* & -f^*\\
\mu_* & i^*
\end{matrix}\right]\cdot \left[\begin{matrix} \widetilde w \\
y
\end{matrix}\right],
$$
where the functions:
$$
\lambda_*: H^{\bullet}(\widetilde W)\to H^{\bullet}(X),\quad \mu_*:
H^{\bullet}(\widetilde W)\to H^{\bullet}(W)
$$
are uniquely determined by the condition that the two matrices above
are the inverse each other, i.e. by the equations:
\begin{equation}\label{eqc1}
\begin{cases}
\lambda_*\circ
j^*+f^*\circ\theta_*={\rm{id}_{H^{\bullet}(X)}}\\
\lambda_*\circ g^*=0\\
\mu_*\circ j^*-i^*\circ \theta_*=0\\
\mu_*\circ g^*={\rm{id}_{H^{\bullet}(W)}},
\end{cases}
\end{equation}
which in turn are equivalent to the equations:
\begin{equation}\label{eqc2}
\begin{cases}
j^*\circ\lambda_*+g^*\circ\mu_*={\rm{id}_{H^{\bullet}(\widetilde
W)}}\\
\theta_*\circ \lambda_*=0\\
j^*\circ f^*=g^*\circ i^*\\
\theta_*\circ f^*={\rm{id}_{H^{\bullet}(Y)}}.
\end{cases}
\end{equation}
Since we also have $\eta_*\circ j^*-i^*\circ \theta_*=0$ and
$\eta_*\circ g^*={\rm{id}_{H^{\bullet}(W)}}$, by the uniqueness, it
follows that $\eta_*=\mu_*$.

\smallskip
\begin{remark}\label{eta2}
Let $\eta_1:=i^*(\theta)$ be the pull-back of $\theta$ on $W$. By
properties of bivariant classes \cite[(G2), p. 26]{FultonCF}, we see
that $(\eta_1)_*\circ j^*-i^*\circ \theta_*=0$ and $(\eta_1)_*\circ
g^*={\rm{id}_{H^{\bullet}(W)}}$. As before, this proves that
$\eta_*=(\eta_1)_*$. Similarly, for the maps induced in homology,
one sees that $\eta^*=(\eta_1)^*$ (see below).
Recall that we do not know whether
$\eta=\eta_1$ (compare with Remark \ref{eta}).
\end{remark}

Using these equations, we are able to explicit also the isomorphism
induced in cohomology by the decomposition appearing in $(iii)$ of
Theorem \ref{Crossintro}. First observe that, since $\eta_*\circ
g^*={\rm{id}_{H^{\bullet}(W)}}$, we may see $H^k(W)$, via $g^*$,  as
a direct summand of $H^k(\widetilde W)$ for every integer $k$. Denote by
$$\frac{H^k(\widetilde W)}{H^k(W)}$$ the corresponding quotient.
\begin{proposition}\label{decexpl}
For every $k$, the map
$$
x\in H^k(X)\to (\theta_*x,\, j^*x)\in H^k(Y)\oplus
\left[\frac{H^k(\widetilde W)}{H^k(W)}\right]
$$
is an isomorphism, whose inverse is the map
$$
(y,\, \widetilde w)\in H^k(Y)\oplus \left[\frac{H^k(\widetilde
W)}{H^k(W)}\right]\to f^*(y)+\lambda_* \widetilde w\in H^k(X).
$$
\end{proposition}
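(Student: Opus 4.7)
The plan is to prove Proposition \ref{decexpl} by direct verification that the two displayed maps are well-defined and mutually inverse, using only the identities (\ref{eqc1}) and (\ref{eqc2}) already established in the preceding proposition. Denote, for brevity, the forward map $x\mapsto (\theta_*x,\, j^*x)$ by $\Phi$, and the candidate inverse $(y,\widetilde w)\mapsto f^*(y)+\lambda_*(\widetilde w)$ by $\Psi$.

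First I would check that $\Psi$ descends to the quotient $H^k(\widetilde W)/H^k(W)$, where $H^k(W)$ is identified with its image $g^*H^k(W)\subseteq H^k(\widetilde W)$. This reduces to the single identity $\lambda_*\circ g^*=0$, which is the second equation of (\ref{eqc1}); it guarantees that replacing $\widetilde w$ by $\widetilde w+g^*(w)$ does not affect $\lambda_*(\widetilde w)$. Similarly, $\Phi$ lands in the quotient by construction, so no compatibility check is needed on that side.

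Next I would verify the two composites. The composition $\Psi\circ\Phi$ sends $x$ to $f^*\theta_*(x)+\lambda_*j^*(x)$, which equals $x$ by the first equation of (\ref{eqc1}). Conversely, $\Phi\circ\Psi(y,\widetilde w)$ has first component $\theta_*(f^*y+\lambda_*\widetilde w)$, equal to $y$ by the fourth and second equations of (\ref{eqc2}) (namely $\theta_*\circ f^*={\rm id}$ and $\theta_*\circ\lambda_*=0$); and second component $j^*(f^*y+\lambda_*\widetilde w)=g^*i^*(y)+\bigl(\widetilde w-g^*\mu_*\widetilde w\bigr)$, using the third equation of (\ref{eqc2}) and rewriting the first as $j^*\circ\lambda_*={\rm id}-g^*\circ\mu_*$. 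Both correction terms lie in $g^*H^k(W)$, hence vanish modulo $H^k(W)$, leaving $\widetilde w$.

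Since every step is a formal consequence of (\ref{eqc1})--(\ref{eqc2}), no genuine obstacle arises: the proposition is essentially a bookkeeping translation of the decomposition $Rf_*\mathbb A_X\cong \mathbb A_Y\oplus \mathcal K$ from Theorem \ref{Crossintro} $(iii)$, with $\mathcal K$ accounting for the summand $H^k(\widetilde W)/H^k(W)$. The only subtlety worth emphasizing is the correct handling of the quotient, which the identity $\lambda_*\circ g^*=0$ makes transparent.
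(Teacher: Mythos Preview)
Your proof is correct and relies on the same identities (\ref{eqc1})--(\ref{eqc2}) as the paper. The only difference is organizational: the paper factors the isomorphism as a composite $H^k(X)\cong H^k(Y)\oplus\ker\theta_*$, then $j^*:\ker\theta_*\stackrel{\sim}{\to}\ker\eta_*$ (with inverse $\lambda_*$), then $\ker\eta_*\cong H^k(\widetilde W)/H^k(W)$, whereas you verify the two composites $\Psi\circ\Phi$ and $\Phi\circ\Psi$ directly; both arguments are equivalent formal manipulations of the same equations.
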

\begin{proof} First observe that the map
$$
x\in H^k(X)\to (\theta_*x,\, x-f^*\theta_*x)\in H^k(Y)\oplus \ker
\theta_*
$$
is an isomorphism. Next, observe that previous equations (\ref{eqc1}) and (\ref{eqc2})
imply that
$j^*$ induces an isomorphism
$$
j^*: \ker \theta_* \to \ker \eta_*,
$$
whose inverse acts as $\lambda_*$. On the other hand, we also have
an isomorphism:
$$
\widetilde w\in  \ker\eta_*\to\widetilde w\in \frac{H^k(\widetilde
W)}{H^k(W)}.
$$
\end{proof}

Similarly, taking hypercohomology with compact support, the triangle
morphisms (\ref{pb}) and (\ref{Gy}) induce commutative diagrams with
exact rows involving Borel-Moore homology:
$$
\xymatrix{ H_{k+1}(U) \ar[r]^{\partial_X} \ar[d]^{=} &
H_k(\widetilde W) \ar[r]^{j_*} \ar[d]^{g_*} & H_k(X) \ar[r]
\ar[d]^{f_*}
&H_{k}(U) \ar[d]^{=}\\
H_{k+1}(U) \ar[r]^{\partial_Y} \ar[u]^{}&H_k(W) \ar[r]^{i_*} &H_k(Y)
\ar[r] &H_{k}(U) \ar[u]^{}}
$$
and
$$
\xymatrix{ H_{k+1}(U) \ar[r]^{\partial_X} \ar[d]  & H_k(\widetilde
W) \ar[r]^{j_*} & H_k(X) \ar[r]
&H_{k}(U) \ar[d]^{}\\
H_{k+1}(U)  \ar[r]^{\partial_Y} \ar[u]^{=}  &H_k(W) \ar[r]^{i_*}
\ar[u]^{\eta^*} &H_k(Y) \ar[r] \ar[u]^{\theta^*} &H_k(U) \ar[u]^{=}}
$$
for every $k\in\mathbb Z$. Since these diagrams commute, and
$f_*\circ \theta^*={\text{id}}_{H_{\bullet}(Y)}$ and $g_*\circ
\eta^*={\text{id}}_{H_{\bullet}(W)}$, a chase diagram shows that the
sequence:
$$
0\to H_k(W)\stackrel{\alpha_*}\to H_k(\widetilde W)\oplus
H_k(Y)\stackrel{\beta_*}\to H_k(X)\to 0,
$$
with
$$
\alpha_*(w):=(\eta^*(w),\, -i_*(w)), \quad \beta_*(\widetilde
w,\,y):=j_*(\widetilde w)+\theta^*(y),
$$
is exact (compare with \cite[pp. 264-266, Proposition 2.5]{RCMP}).
Moreover, the map
$$
(\widetilde w,\, y)\in H_k(\widetilde W)\oplus H_k(Y)\to
g_*\widetilde w\in H_k(W)
$$
is a left section for the sequence, and so we get an explicit
isomorphism:
\begin{proposition}\label{crossexpl2}
The map
$$
\varphi_*:H_k(\widetilde W)\oplus H_k(Y)\to H_k(X)\oplus H_k(W),
$$
with
$$
\varphi_*(\widetilde w,\, y):=(j_*(\widetilde
w)+\theta^*(y),\,g_*(\widetilde w)),
$$
is an isomorphism.
\end{proposition}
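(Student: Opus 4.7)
The plan is to mirror the cohomological argument leading to Proposition \ref{crossexpl}, this time working with the two commutative diagrams of long exact sequences in Borel--Moore homology displayed just above the statement. These diagrams come from applying hypercohomology with compact supports to the triangle morphisms (\ref{pb}) and (\ref{Gy}); the crucial ingredients are the identities $f_{*}\circ\theta^{*}=\mathrm{id}_{H_{\bullet}(Y)}$ and $g_{*}\circ\eta^{*}=\mathrm{id}_{H_{\bullet}(W)}$ (the homological counterparts of the degree-one condition), together with the commutativity relations $f_{*}\circ j_{*}=i_{*}\circ g_{*}$, $j_{*}\circ\eta^{*}=\theta^{*}\circ i_{*}$, $g_{*}\circ\partial_{X}=\partial_{Y}$, and the compatibility $\mathrm{res}_{X}\circ\theta^{*}=\mathrm{res}_{Y}$ of the restrictions to $U$.

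The first step is to establish exactness of the short sequence
$$0\to H_{k}(W)\stackrel{\alpha_{*}}\to H_{k}(\widetilde W)\oplus H_{k}(Y)\stackrel{\beta_{*}}\to H_{k}(X)\to 0,$$
with $\alpha_{*},\beta_{*}$ as defined in the excerpt. Injectivity of $\alpha_{*}$ is immediate from $g_{*}\eta^{*}=\mathrm{id}$, while $\beta_{*}\circ\alpha_{*}=0$ reduces to $j_{*}\eta^{*}=\theta^{*}i_{*}$. For surjectivity of $\beta_{*}$ I would take $x\in H_{k}(X)$, set $y:=f_{*}(x)$, and use the fact that over $U$ the maps $f_{*}$ and $\theta^{*}$ are mutually inverse to conclude that $x-\theta^{*}(y)$ restricts to zero in $H_{k}(U)$, hence lies in $\mathrm{im}(j_{*})$ by exactness of the upper row.

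The main step is middle exactness. Given $(\widetilde w,y)\in\ker\beta_{*}$, applying $f_{*}$ to $j_{*}(\widetilde w)+\theta^{*}(y)=0$ gives $y=-i_{*}(g_{*}(\widetilde w))$, so the candidate element is $w:=g_{*}(\widetilde w)$. The difference $\widetilde w-\eta^{*}(w)$ then lies simultaneously in $\ker g_{*}$ and in $\ker j_{*}=\mathrm{im}\,\partial_{X}$; writing it as $\partial_{X}(u)$ and using $g_{*}\partial_{X}=\partial_{Y}$ puts $u$ in $\ker\partial_{Y}=\mathrm{im}(\mathrm{res}_{Y})$, and the identity $\mathrm{res}_{X}\circ\theta^{*}=\mathrm{res}_{Y}$ exhibits $u$ as a restriction from $H_{k+1}(X)$, so that $u\in\ker\partial_{X}$ and $\widetilde w=\eta^{*}(w)$, yielding $(\widetilde w,y)=\alpha_{*}(w)$.

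Once the short exact sequence is established, the map $s:(\widetilde w,y)\mapsto g_{*}(\widetilde w)$ clearly satisfies $s\circ\alpha_{*}=\mathrm{id}_{H_{k}(W)}$ and thus provides a canonical left section; pairing $\beta_{*}$ with $s$ recovers exactly the formula $\varphi_{*}(\widetilde w,y)=(j_{*}(\widetilde w)+\theta^{*}(y),g_{*}(\widetilde w))$, which is therefore an isomorphism. The only mildly delicate point is the middle-exactness chase, since it requires moving an element through both ladders simultaneously and invoking the compatibility of restrictions; however, this is the strict homological dual of the chase implicit in Proposition \ref{crossexpl}, so no genuinely new idea is needed.
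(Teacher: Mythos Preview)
Your proposal is correct and follows exactly the same route as the paper: establish the short exact sequence $0\to H_k(W)\to H_k(\widetilde W)\oplus H_k(Y)\to H_k(X)\to 0$ by a diagram chase using the two ladders in Borel--Moore homology, then split it via the left section $(\widetilde w,y)\mapsto g_*(\widetilde w)$. The paper merely asserts that ``a chase diagram shows'' exactness, whereas you have written out the chase in full (and correctly), so there is nothing to add.
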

We may interpret the map $\varphi_*$ as a matrix product:
$$
\left[\begin{matrix} x \\
w
\end{matrix}\right]=\left[\begin{matrix} j_* & \theta^*\\
g_* & 0
\end{matrix}\right]\cdot \left[\begin{matrix} \widetilde w \\
y
\end{matrix}\right].
$$
Since
$$
\varphi_*(\eta^*w,\,-i_*w)=(0,w),
$$
the matrix defining the inverse map $(\varphi_*){^{-1}}$ has the
following form:
$$
\left[\begin{matrix} \widetilde w \\
y
\end{matrix}\right]=\left[\begin{matrix} \lambda^* & \eta^*\\
\mu^* & -i_*
\end{matrix}\right]\cdot \left[\begin{matrix} x \\
w
\end{matrix}\right],
$$
where the functions:
$$
\lambda^*: H_{\bullet}(X)\to H_{\bullet}(\widetilde W), \quad \mu^*:
H_{\bullet}(X)\to H_{\bullet}(Y)
$$
are uniquely determined by the condition that the two matrices above
are the inverse each other, i.e. by the equations:
\begin{equation}\label{eqo1}
\begin{cases}
j_*\circ \lambda^*+\theta^*\circ \mu^*={\rm{id}_{H_{\bullet}(X)}}\\
j_*\circ \eta^* -\theta^*\circ i_*=0\\
g_*\circ \lambda^*=0\\
g_*\circ \eta^*={\rm{id}_{H_{\bullet}(W)}},
\end{cases}
\end{equation}
which in turn are equivalent to the equations:
\begin{equation}\label{eqo2}
\begin{cases}
\lambda^*\circ j_*+\eta^*\circ g_*={\rm{id}_{H_{\bullet}(\widetilde
W)}}\\
\lambda^*\circ \theta^*=0\\
\mu^*\circ j_*=i_*\circ g_*\\
\mu^*\circ \theta^*={\rm{id}_{H^{\bullet}(Y)}}.
\end{cases}
\end{equation}
In particular, it follows that $\mu^*=f_*$. Using these equations,
we are able to explicit the isomorphism induced in Borel-Moore
homology by  $(iii)$ of Theorem \ref{Crossintro}. First, observe
that, since $g_*\circ\,\eta^*={\rm{id}_{H_{\bullet}(W)}}$, we may
see $H_k(W)$, via $\eta^*$,  as a direct summand of $H_k(\widetilde
W)$ for every integer $k$. Denote by
$$\frac{H_k(\widetilde W)}{H_k(W)}$$ the corresponding quotient.
\begin{proposition}\label{decexpl2}
For every $k$, the map
$$
x\in H_k(X)\to (f_*x,\, \lambda^* x)\in H_k(Y)\oplus
\left[\frac{H_k(\widetilde W)}{H_k(W)}\right]
$$
is an isomorphism, whose inverse is the map
$$
(y,\, \widetilde w)\in H_k(Y)\oplus \left[\frac{H_k(\widetilde
W)}{H_k(W)}\right]\to \theta^*(y)+j_*\lambda^* j_* \widetilde w\in
H_k(X).
$$
\end{proposition}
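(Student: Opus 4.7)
The plan is to mirror the argument used for Proposition \ref{decexpl}, transposed to Borel--Moore homology and driven by the identities (\ref{eqo1}) and (\ref{eqo2}) in place of their cohomological counterparts. The starting observation is that $\mu^{*}=f_{*}$ together with $f_{*}\circ\theta^{*}={\rm{id}}_{H_{\bullet}(Y)}$ yields a first splitting
$$
H_{k}(X)\;\cong\;H_{k}(Y)\oplus\ker f_{*},\qquad x\longmapsto (f_{*}x,\,x-\theta^{*}f_{*}x),
$$
with inverse $(y,x')\mapsto\theta^{*}y+x'$. Hence it suffices to identify $\ker f_{*}$ canonically with $H_{k}(\widetilde W)/H_{k}(W)$ and then compose.

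Next I would show that $\lambda^{*}$ restricts to an isomorphism $\ker f_{*}\stackrel{\cong}{\to}\ker g_{*}$ with inverse $j_{*}$. The equation $g_{*}\circ\lambda^{*}=0$ from (\ref{eqo1}) guarantees $\lambda^{*}(\ker f_{*})\subseteq\ker g_{*}$, while $f_{*}\circ j_{*}=i_{*}\circ g_{*}$ (from (\ref{eqo2}), since $\mu^{*}=f_{*}$) gives $j_{*}(\ker g_{*})\subseteq\ker f_{*}$; the identities $j_{*}\circ\lambda^{*}={\rm{id}}-\theta^{*}\circ f_{*}$ in (\ref{eqo1}) and $\lambda^{*}\circ j_{*}={\rm{id}}-\eta^{*}\circ g_{*}$ in (\ref{eqo2}) then restrict to the identity on the respective kernels. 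Moreover, $g_{*}\circ\eta^{*}={\rm{id}}$ produces the direct sum decomposition $H_{k}(\widetilde W)=\ker g_{*}\oplus\eta^{*}H_{k}(W)$, so the composition $\ker g_{*}\hookrightarrow H_{k}(\widetilde W)\twoheadrightarrow H_{k}(\widetilde W)/H_{k}(W)$ is an isomorphism; a representative $\widetilde w$ of a class in the quotient is lifted to $\ker g_{*}$ via the formula $\widetilde w-\eta^{*}g_{*}\widetilde w$, which by (\ref{eqo2}) equals $\lambda^{*}j_{*}\widetilde w$.

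Finally, I would assemble the three isomorphisms. Because $\lambda^{*}\circ\theta^{*}=0$ (from (\ref{eqo2})), the second coordinate $\lambda^{*}(x-\theta^{*}f_{*}x)$ collapses to $\lambda^{*}x$, producing the forward map $x\mapsto(f_{*}x,\,[\lambda^{*}x])$. Running the inverses in reverse order, a class $[\widetilde w]$ is lifted to $\ker g_{*}$ as $\lambda^{*}j_{*}\widetilde w$, pushed into $\ker f_{*}$ by $j_{*}$, and added to $\theta^{*}y$, giving precisely $\theta^{*}y+j_{*}\lambda^{*}j_{*}\widetilde w$. The only genuine obstacle is to confirm that this inverse is well-defined on the quotient: if $\widetilde w$ is replaced by $\widetilde w+\eta^{*}w$, the extra contribution $j_{*}\lambda^{*}j_{*}\eta^{*}w$ vanishes, because $j_{*}\circ\eta^{*}=\theta^{*}\circ i_{*}$ (from (\ref{eqo1})) and $\lambda^{*}\circ\theta^{*}=0$ force $\lambda^{*}j_{*}\eta^{*}w=0$. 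The remainder of the verification that the two composites equal the identity reduces directly to the same list of identities.
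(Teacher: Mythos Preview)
Your proof is correct and follows essentially the same three-step approach as the paper: split $H_k(X)\cong H_k(Y)\oplus\ker f_*$ via $x\mapsto(f_*x,\,x-\theta^*f_*x)$, identify $\ker f_*\cong\ker g_*$ via $\lambda^*$ (with inverse $j_*$), and identify $\ker g_*$ with the quotient $H_k(\widetilde W)/H_k(W)$. You supply more detail than the paper, in particular the explicit lift $\lambda^*j_*\widetilde w$ and the well-definedness check for the inverse, but the strategy is the same.
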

\begin{proof} First observe that the map
$$
x\in H_k(X)\to (f_*x,\, x-\theta^*f_*x)\in H_k(Y)\oplus \ker f_*
$$
is an isomorphism. Next, observe that previous equations (\ref{eqo1}) and (\ref{eqo2}) imply that
$\lambda^*$ induces an isomorphism
$$
\lambda^*: \ker f_* \to \ker g_*,
$$
whose inverse acts as $j_*$. On the other hand, we also have an
isomorphism:
$$
\widetilde w\in  \ker g_*\to\widetilde w\in \frac{H_k(\widetilde
W)}{H_k(W)}.
$$
\end{proof}

\bigskip
\section{Behaviour under the duality morphism.}
One may ask how previous decompositions given in Proposition
\ref{decexpl} and Proposition \ref{decexpl2}, behave under the cap
product with a homology class. In this section  we consider only the
case of the fundamental class, and algebraic maps.

\smallskip
Consider a map $f:X\to Y$ as in Theorem \ref{Crossintro}, and assume
there exists a bivariant class of $f$ of degree one. Moreover,
assume that $f$ is onto, and that $X$ and $Y$ are open subsets of complex quasi-projective
varieties of the same complex dimension $n$. Let $[X]\in H_{2n}(X)$
be the fundamental class of $X$, and consider the map
\begin{equation}\label{dm}
\mathcal D_X: x\in H^{k}(X)\to x\cap [X]\in H_{2n-k}(X)
\end{equation}
given by the cap product with $[X]$. When $X$ is a circuit, this map
is called {\it the duality morphism} \cite[p. 150]{McCrory}. If, in
addition, $X$ is smooth, then $\mathcal D_X$ is the Poincar\'e
Duality isomorphism. In view of the decompositions given in
Proposition \ref{decexpl} and Proposition \ref{decexpl2}, the map
$\mathcal D_X$ identifies with a map
$$
\mathcal D_X:H^k(Y)\oplus \left[\frac{H^k(\widetilde
W)}{H^k(W)}\right]\to H_{2n-k}(Y)\oplus
\left[\frac{H_{2n-k}(\widetilde W)}{H_{2n-k}(W)}\right]
$$
which acts as follows:
$$
\mathcal D_X(y,\,\widetilde w)=(f_*([X]\cap(f^*y+\lambda_*\widetilde
w)),\, \lambda^*([X]\cap(f^*y+\lambda_*\widetilde w))).
$$
The map $\mathcal D_X$ induces two projections
$$
P_1:y\in H^{k}(Y)\to f_*([X]\cap f^*y)\in H_{2n-k}(Y),
$$
$$
P_2:\widetilde w\in \left[\frac{H^k(\widetilde W)}{H^k(W)}\right]\to
\lambda^*([X]\cap \lambda_*\widetilde w)\in
\left[\frac{H_{2n-k}(\widetilde W)}{H_{2n-k}(W)}\right].
$$
Observe that, by the projection formula \cite[p. 24]{FultonCF}, we have
$$
 f_*([X]\cap f^*y)=[Y]\cap y.
$$
Therefore, $P_1= \mathcal D_Y$, i.e. $P_1$ is nothing but the
duality morphism on $Y$.

\begin{corollary}\label{duality}
The duality morphism $\mathcal D_X:H^{k}(X)\to H_{2n-k}(X)$ is the
direct sum of $\mathcal D_Y$ and $P_2$, i.e.
$$
\mathcal D_X=\mathcal D_Y\oplus P_2.
$$
\end{corollary}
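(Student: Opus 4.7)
The plan is to plug $x = f^*(y) + \lambda_*(\widetilde w) \in H^k(X)$, as furnished by Proposition~\ref{decexpl}, into the formula for $\mathcal D_X(y,\widetilde w)$ already displayed in the excerpt, and to verify that the two off-diagonal contributions vanish; the diagonal terms then reduce to the projection formula $f_*([X]\cap f^*y) = [Y]\cap y = \mathcal D_Y(y)$ (already noted) and to the definition of $P_2$. What remains is therefore to show
$$
\lambda^*\bigl([X]\cap f^*y\bigr)=0\qquad\text{and}\qquad f_*\bigl([X]\cap \lambda_*\widetilde w\bigr)=0.
$$

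The pivotal preliminary is the equality $\theta^*[Y]=[X]$. Since $X$ is irreducible of complex dimension $n$, $H_{2n}(X)\cong\mathbb A\cdot[X]$, so $\theta^*[Y]=c[X]$ for some $c\in\mathbb A$; applying $f_*$ and using $f_*\circ\theta^*=\mathrm{id}_{H_\bullet(Y)}$ (recorded after Proposition~\ref{decexpl2} as $\mu^*=f_*$) together with $f_*[X]=[Y]$ (birationality) forces $c=1$. With $\theta^*[Y]=[X]$ in hand, the standard Fulton--MacPherson compatibility of bivariant product with cap product, $\theta^*(y\cap\alpha)=f^*y\cap\theta^*\alpha$, specialised at $\alpha=[Y]$ yields the central identity
$$
[X]\cap f^*y=\theta^*\bigl([Y]\cap y\bigr),
$$
and its companion
$$
f_*\bigl(x\cap[X]\bigr)=\theta_*(x)\cap[Y],\qquad x\in H^{\bullet}(X),
$$
follows by associativity of the bivariant product combined with $f_*(x\cdot\theta)=\theta_*(x)$.

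Granted these, the first off-diagonal term vanishes via $\lambda^*\circ\theta^*=0$ from (\ref{eqo2}):
$$
\lambda^*\bigl([X]\cap f^*y\bigr)=\lambda^*\theta^*\bigl([Y]\cap y\bigr)=0;
$$
and the second vanishes via $\theta_*\circ\lambda_*=0$ from (\ref{eqc2}):
$$
f_*\bigl([X]\cap \lambda_*\widetilde w\bigr)=\theta_*(\lambda_*\widetilde w)\cap[Y]=0.
$$

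The main obstacle I anticipate is a rigorous sheaf-theoretic derivation of the two bivariant identities above in the present derived-category setting: this amounts to checking that the Verdier-dualised bivariant class $\theta^{\vee}:\omega_Y\to Rf_*\omega_X$ intertwines the sheaf-level fundamental-class morphisms $\mathbb A_X\to\omega_X[-2n]$ and $\mathbb A_Y\to\omega_Y[-2n]$. Once this compatibility is established, the entire Corollary reduces to the mechanical substitution outlined above, using only the relations already recorded in equations (\ref{eqc1})--(\ref{eqo2}).
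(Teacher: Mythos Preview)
Your argument is correct and follows essentially the same route as the paper: establish $\theta^*[Y]=[X]$, then use the Fulton--MacPherson compatibilities (the paper cites \cite[p.~26, G4~(ii)--(iii)]{FultonCF} directly) together with $\theta_*\circ\lambda_*=0$ from (\ref{eqc2}) and $\lambda^*\circ\theta^*=0$ from (\ref{eqo2}) to kill the off-diagonal terms.

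Two small remarks. First, your ``main obstacle'' is not one: the identities $f_*(\theta^*[Y]\cap x)=\theta_*(x)\cap[Y]$ and $\theta^*[Y]\cap f^*y=\theta^*([Y]\cap y)$ are precisely axioms G4~(ii) and G4~(iii) of bivariant theory, so no separate sheaf-theoretic derivation is needed. Second, the paper proves $\theta^*[Y]=[X]$ without invoking irreducibility of $X$ or $H_{2n}(X)\cong\mathbb A$: it uses the relation $j_*\circ\lambda^*+\theta^*\circ f_*=\mathrm{id}_{H_\bullet(X)}$ from (\ref{eqo1}) applied to $[X]$, together with $\lambda^*[X]\in H_{2n}(\widetilde W)=0$ for dimensional reasons. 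This is slightly more robust than your rank-one argument, since the hypotheses of Section~5 do not explicitly require $X$ irreducible.
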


\begin{proof}
We have to prove that:

\medskip
$\bullet$) for every $\widetilde w\in \frac{H^i(\widetilde
W)}{H^i(W)}$ one has $f_*([X]\cap \lambda_* \widetilde w)=0$, and

\smallskip
$\bullet$) for every $y\in H^i(Y)$ one has $\lambda^*([X]\cap
f^*y)=0$.

\medskip
To this purpose, first observe that $\theta^*([Y])=[X]$, i.e. the
Gysin map sends the fundamental class of $Y$ in the fundamental
class of $X$. In fact, from the equations (\ref{eqo1}) we obtained in homology
(recall that $\mu^*=f_*$),
we know that
$\theta^*([Y])=\theta^*f_*[X]=[X]-(j_*\circ\lambda^*)([X])=[X]$
because $\lambda^*[X]=0\in H_{2n}(\widetilde W)=\{0\}$ for
dimensional reasons.

\medskip
 $\bullet$) Now, by \cite[p. 26, G4, (ii)]{FultonCF}, we have:
$$
f_*([X]\cap \lambda_* \widetilde w)=f_*(\theta^*[Y]\cap \lambda_*
\widetilde w)=(\theta_*\lambda_*\widetilde w)\cap [Y]
$$
which is zero because, from the equations (\ref{eqc2}) we obtained in cohomology,
we know that $\theta_*\circ\lambda_*=0$ .

\smallskip
$\bullet$) Next, by \cite[p. 26, G4, (iii)]{FultonCF}, we have:
$$\lambda^*([X]\cap f^*y)=
\lambda^*(\theta^*[Y]\cap f^*y)=\lambda^*(\theta^*(Y\cap y))
$$
which is zero because, from the equations (\ref{eqo2}) we obtained in homology,
we know that  $\lambda^*\circ \theta^*=0$.
\end{proof}

\bigskip
\section{Resolution of singularities of a homology manifold.}

In this section we are going to prove Theorem \ref{homology} stated
in the Introduction. Observe that it applies to a resolution of
singularities of $Y$.

\medskip
First assume that $Y$ is an $\mathbb
A$-homology manifold.

By \cite[Definition 3.1, Theorem 3.7]{Brasselet}, we know that the
fundamental class of $Y$
$$
[Y]\in H_{2n}(Y)\cong H^{-2n}(Y\stackrel{}\to pt)
$$
is a strong orientation. Therefore, we have
$$
Hom_{D^{b}_{c}(Y)}(Rf_*\mathbb A_X, \mathbb A_Y)\cong
H^0(X\stackrel{f}\to Y)\stackrel{\bullet [Y]}\cong
H^{-2n}(X\stackrel{}\to pt)\cong H_{2n}(X)\cong H^0(X).
$$
Since $f$ is birational, the bivariant class corresponding to
$1_X\in H^0(X)$ is a bivariant class of degree one for $f$, and it
is unique (compare with Remark \ref{remintro}, $(ii)$ and $(v)$). By
Theorem \ref{Crossintro}, we know there exists a decomposition
\begin{equation}\label{D1}
Rf_*\mathbb A_X[n] \cong \mathbb A_Y[n] \oplus \mathcal K[n].
\end{equation}
It is clear that $\mathcal K$ is supported on $W$. Passing to
Verdier dual, we get:
\begin{equation}\label{D2}
D\left(Rf_*\mathbb A_X[n]\right) \cong D\left(\mathbb A_Y[n]\right)
\oplus D\left(\mathcal K[n]\right). \end{equation} Now let $$[X]\in
H_{2n}(X)$$ be the fundamental class of $X$. We have \cite[p.
804-805]{Brasselet}:
$$
[X]\in H_{2n}(X)\cong H^{-2n}(X\stackrel{}\to pt.)\cong
Hom_{D^{b}_{c}(X)}(\mathbb A_X[n],D\left(\mathbb A_X[n]\right)).
$$
Therefore, $[X]$ corresponds to a morphism
\begin{equation}\label{is}
\mathbb A_X[n]\to D\left(\mathbb A_X[n]\right),
\end{equation}
whose induced map in hypercohomology is nothing but the duality
morphism (\ref{dm}). If we assume that $X$ is an $\mathbb
A$-homology manifold, the morphism (\ref{is}) is an isomorphism
\cite[Proof of Theorem 3.7]{Brasselet}. Since $D\left(Rf_*\mathbb
A_X[n]\right)\cong Rf_* D\left(A_X[n]\right)$ \cite[p. 69]{Dimca2},
it induces an isomorphism
\begin{equation*}
Rf_*\mathbb A_X[n]\to D\left(Rf_*\mathbb A_X[n]\right),
\end{equation*}
which in turn, via the previous decompositions (\ref{D1}) and
(\ref{D2}), induces two projections
$$
\mathbb A_Y[n] \to D\left(\mathbb A_Y[n]\right), \quad  \mathcal
K[n]\to D\left( \mathcal K[n]\right).
$$
By Corollary \ref{duality}, we know that the maps induced in
hypercohomology by $\mathcal K[n]\to D\left( \mathcal K[n]\right)$
are isomorphisms, and this holds true when restricting to every open
subset of $Y$. Therefore, we have $ \mathcal K[n]\cong
D\left(\mathcal K[n]\right)$, i.e. $\mathcal K[n]$ is self-dual.

\medskip
Conversely, assume  there exists a bivariant class $\theta$ of
degree one. Arguing as before, by Corollary \ref{duality}, we know
that the isomorphism (\ref{is}) induces an isomorphism $\mathbb
A_Y[n]\cong D\left(\mathbb A_Y[n]\right)$. This is equivalent to say
that $Y$ is an $\mathbb A$-homology manifold \cite[loc.
cit.]{Brasselet}.

This concludes the proof of Theorem \ref{homology}.

\begin{remark}\label{remfin}
$(i)$ With the notations as in Theorem \ref{homology}, assume  there
exists a bivariant class $\theta$ of degree one. When the
coefficients are in a field, we may prove that $Y$ is an $\mathbb
A$-homology manifold in a different manner, using the Decomposition
Theorem \cite[p. 161]{Dimca2}. In fact, by the Decomposition
Theorem, there exists a certain decomposition
$$Rf_*\mathbb A_X[n] \cong  IC_Y^{\bullet} \oplus \mathcal H.$$
Comparing with the decomposition given by Theorem \ref{Crossintro}
$$Rf_*\mathbb A_X[n] \cong \mathbb A_Y[n] \oplus \mathcal K[n],$$
it follows a non-zero endomorphism $IC_Y^{\bullet}\to \mathbb
A_Y[n]\to IC_Y^{\bullet}$. On the other hand,  $IC_Y^{\bullet}$
belongs to the core of $D^b_c(Y)$, which is an abelian subcategory
of $D^b_c(Y)$. In this category, $IC_Y^{\bullet}$ is a simple
object. Therefore, by Schur's Lemma, the composition
$IC_Y^{\bullet}\to \mathbb A_Y[n]\to IC_Y^{\bullet}$ is an
automorphism. Observe that also the composition $\mathbb A_Y[n]\to
IC_Y^{\bullet}\to \mathbb A_Y[n]$ is an automorphism, because
$Hom_{D^{b}_{c}(Y)}(\mathbb A_Y, \mathbb A_Y)\cong H^0(Y)$. So,
$IC_Y^{\bullet}\cong \mathbb A_Y[n]$.

\smallskip
$(ii)$ Since $\mathcal K[n]$ is self-dual, it follows that
$$
h^{2n-i}(\widetilde W)-h_i(\widetilde W)=h^{2n-i}(W)-h_i(W)
$$
for every $i\in\mathbb Z$.

\smallskip
$(iii)$ The following example shows there exist projective
birational maps $f:X\to Y$ such that $H^0(X\stackrel{f}\to Y)\neq
0$, without bivariant classes of degree one. The coefficients are in
$\mathbb Q$.

Let $C\subset \mathbb P^3$ be a projective non-singular curve of
genus $g\geq 1$. Let $Y\subset \mathbb P^4$ be the cone over $C$,
and let $f:X\to Y$ be the blowing-up of $Y$ at the vertex $y\in Y$.
By the Decomposition Theorem (see e.g. \cite{DGF3}) we have
$$
Rf_*\mathbb Q_X=\mathbb Q_y[-2]\oplus IC_Y^{\bullet}[-2].
$$
On the other hand, combining \cite[9.13, p. 128]{Iversen} with
\cite[Remark 2.4.5, (i), p. 46]{Dimca2}, we have
$$
Hom_{D^b_c(Y)}(\mathbb Q_y, \mathbb Q_Y[2])\cong
H^2(Y,Y\backslash\{y\})\cong H^1(L),
$$
where $L$ is the link of $Y$ at the vertex $y$. The Hopf fibration
$L\to C$ induces a Gysin sequence
$$
0\to H^1(C)\to H^1(L)\to H^0(C)\to H^2(C)\to\dots
$$
from which we get $h^1(L)=h^1(C)=2g\geq 2$. It follows that
$H^0(X\stackrel{f}\to Y)\cong Hom_{D^{b}_{c}(Y)}(Rf_*\mathbb A_X,
\mathbb A_Y)\neq 0$, and that $Y$ is not a homology manifold. In
particular, since $X$ is smooth, in view of Theorem \ref{homology},
there is no a bivariant class of degree one.
\end{remark}

\begin{corollary}\label{strong}
Let $f:X\to Y$ be a projective birational morphism between
irreducible and quasi-projective complex varieties of the same
complex dimension $n$. Let $\theta\in H^0(X\stackrel{f}\to Y)$ be a
bivariant class. If $\theta$ is a strong orientation for $f$, then
$\theta$ is a bivariant class of degree one for $f$, up to
multiplication by a unit. Moreover, if $X$ is an $\mathbb
A$-manifold and $\theta$ is a bivariant class of degree one for $f$,
then $\theta$ is a strong orientation for $f$.
\end{corollary}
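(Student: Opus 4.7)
The proof splits into the two implications. For the first implication, suppose $\theta$ is a strong orientation for $f$. Since $Y$ is irreducible and complex quasi-projective, hence path-connected, we have $H^0(Y)\cong\mathbb A$, so we may write $\theta_0(1_X)=d\cdot 1_Y$ for a unique $d\in\mathbb A$. By Remark \ref{remintro}~$(ii)$, once $d$ is known to be a unit of $\mathbb A$, the class $d^{-1}\cdot\theta$ is a bivariant class of degree one, which settles the claim. To see that $d$ is a unit, let $U\subseteq Y$ be a Zariski open subset on which $f$ restricts to an isomorphism $f^{-1}(U)\cong U$, and denote by $\theta':=i^{*}\theta\in H^0(f^{-1}(U)\to U)$ the pull-back of $\theta$, where $i:U\hookrightarrow Y$. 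Since pull-back of a strong orientation under an open immersion is again a strong orientation (a formal consequence of the bivariant axioms $G_{2}$--$G_{3}$), $\theta'$ is a strong orientation of an isomorphism. Under the identification $f^{-1}(U)\cong U$, the strong-orientation condition on the zeroth component forces $\theta'_0(1_{f^{-1}(U)})\in H^0(U)\cong\mathbb A$ to act invertibly on $H^{\bullet}(U)$ by cup product, hence to be a unit in $\mathbb A$. By Remark \ref{remintro}~$(ii)$, $\theta'_0(1_{f^{-1}(U)})=i^{*}\theta_0(1_X)=d\cdot 1_U$, so $d$ itself is the desired unit.

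For the converse, assume $X$ is an $\mathbb A$-homology manifold and that $\theta\in H^0(X\to Y)$ has degree one. The second bullet of Theorem \ref{homology} gives that $Y$ too is an $\mathbb A$-homology manifold, so by the Brasselet characterization cited in the proof of Theorem \ref{homology}, both fundamental classes
$$
[X]\in H_{2n}(X)\cong H^{-2n}(X\to pt),\qquad [Y]\in H_{2n}(Y)\cong H^{-2n}(Y\to pt)
$$
are strong orientations of the respective structure maps to a point. Consequently, for every $g:Z\to X$, right-multiplication by $[X]$ and by $[Y]$ induces isomorphisms of bivariant groups
$$
H^{\bullet}(Z\to X)\stackrel{\bullet\,[X]}{\cong} H^{\bullet-2n}(Z\to pt),\qquad H^{\bullet}(Z\to Y)\stackrel{\bullet\,[Y]}{\cong} H^{\bullet-2n}(Z\to pt).
$$

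The crux is then the identity $\theta\cdot [Y]=[X]$ in $H_{2n}(X)$. This is exactly the assertion that, under the chain of isomorphisms
$$
H^0(X\to Y)\stackrel{\bullet\,[Y]}{\cong} H^{-2n}(X\to pt)\cong H_{2n}(X)\cong H^0(X)
$$
already used in the first part of the proof of Theorem \ref{homology}, the unique degree-one class $\theta$ corresponds to $1_X\in H^0(X)$, whose Poincar\'e dual is $[X]$. Granted this identity, associativity of the bivariant product yields, for every $\alpha\in H^{\bullet}(Z\to X)$,
$$
(\alpha\cdot\theta)\cdot [Y]=\alpha\cdot(\theta\cdot [Y])=\alpha\cdot [X].
$$
Since $\bullet\,[X]$ and $\bullet\,[Y]$ are both isomorphisms, a two-out-of-three argument forces $\bullet\,\theta:H^{\bullet}(Z\to X)\to H^{\bullet}(Z\to Y)$ to be an isomorphism for every $g:Z\to X$, which is precisely the defining property of a strong orientation. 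The main obstacle I anticipate is pinning down the identity $\theta\cdot [Y]=[X]$, but this is already implicit in the chain of isomorphisms appearing in the proof of Theorem \ref{homology}, so no new geometric input is required beyond a bivariant diagram chase.
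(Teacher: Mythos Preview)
Your argument is correct, and for the first implication it is close to the paper's: both restrict to an open $U$ where $f$ is an isomorphism and extract a unit. The paper, however, does not pull back $\theta$ to $U$; it uses the open inclusion $f^{-1}(U)\hookrightarrow X$ directly as the test morphism $g$ in the strong-orientation definition, then identifies $H^0(f^{-1}(U)\to X)\cong H^0(f^{-1}(U))$ and $H^0(U\to Y)\cong H^0(U)$ via Verdier duality (since $j^{!}=j^{*}$ for open immersions). This avoids your intermediate claim that the pull-back of a strong orientation along an open immersion is again a strong orientation; that claim is true, but it is not quite as ``formal'' a consequence of $G_2$--$G_3$ as you suggest, and the paper's route sidesteps it.

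For the converse your approach is genuinely different from the paper's. The paper proceeds sheaf-theoretically: once $Y$ is an $\mathbb A$-homology manifold one computes directly
\[
f^{!}\mathbb A_Y = D(f^*(D\mathbb A_Y)) = D(f^*(\mathbb A_Y[2n])) = D(\mathbb A_X[2n]) = \mathbb A_X,
\]
so that $\theta$, corresponding to $1_X$ under $\mathrm{Hom}(\mathbb A_X,f^{!}\mathbb A_Y)\cong H^0(X)$, is an isomorphism $\mathbb A_X\to f^{!}\mathbb A_Y$; the strong-orientation conclusion then follows from \cite[7.3.2]{FultonCF}. Your argument instead stays entirely within the bivariant formalism: from $\theta\cdot[Y]=[X]$ (which is exactly $\theta^{*}([Y])=[X]$, established in the proof of Corollary~\ref{duality}) and the fact that $[X]$, $[Y]$ are strong orientations of the structure maps, associativity and a two-out-of-three argument give the result. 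Your route is more elementary and makes no use of $f^{!}$ or the sheaf-level criterion; the paper's route is shorter once one is willing to invoke \cite[7.3.2]{FultonCF}, and it makes the underlying reason ($f^{!}\mathbb A_Y\cong\mathbb A_X$) transparent.
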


\begin{proof} First assume that $\theta$ is a strong orientation for
$f$.

Let $U\subset Y$ be a Zariski non-empty open subset of $Y$ such that
$f^{-1}(U)\cong U$ via $f$.  Product by $\theta$ gives an
isomorphism:
$$
H^0(f^{-1}(U)\to X)\stackrel{\bullet \theta}\to H^0(U\to Y).
$$
On the other hand, by Verdier Duality \cite[p. 803]{Brasselet}, and
\cite[Corollary 3.2.12., p. 65]{Dimca2}, we have:
$$
H^0(f^{-1}(U)\to X)\cong H^0(f^{-1}(U)), \quad {\text{and}}\quad
H^0(U\to Y)\cong H^0(U).
$$
Therefore, $\theta$ induces an isomorphism $H^0(f^{-1}(U))\to
H^0(U)$. It follows that, up to multiplication by a unit, $\theta$
is a bivariant class of degree one.

Conversely, assume $X$ is an $\mathbb A$-manifold, and $\theta$ is a
bivariant class of degree one for $f$.

In this case, by Theorem \ref{homology}, we know that also $Y$ is an
$\mathbb A$-homology manifold, and that $\theta$ corresponds to
$1_X$ in the isomorphism $H^0(X\stackrel{f}\to Y)\cong H^0(X)$.
Since $X$ and $Y$ are $\mathbb A$-manifolds,  we get:
$$
f^{!}(\mathbb A_Y)=D(f^*(D(\mathbb A_Y)))=D(f^*(\mathbb
A_Y[2n]))=D(\mathbb A_X[2n])=\mathbb A_X.
$$
Therefore, $\theta$ corresponds to an isomorphism in
$$
Hom_{D^b_c(X)}(\mathbb A_X,f^{!}\mathbb A_Y)\cong
Hom_{D^b_c(X)}(\mathbb A_X,\mathbb A_X)\cong H^0(X).
$$
By \cite[7.3.2, proof of Proposition, p. 85]{FultonCF}, we deduce
that $\theta$ is a strong orientation for $f$.
\end{proof}

\begin{proposition}\label{strong2}
Let $f:X\to Y$ be a projective birational morphism between
irreducible and quasi-projective complex varieties of the same
complex dimension $n$. Let $\theta\in H^0(X\stackrel{f}\to Y)$ be a
bivariant class. If $\theta$ is a strong orientation for $f$, and
$Y$ is an $\mathbb A$-homology manifold, then also $X$ is so.
\end{proposition}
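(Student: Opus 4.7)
The plan is to reduce the claim that $X$ is an $\mathbb A$-homology manifold to the self-duality of $\mathbb A_X[n]$, i.e.\ to an isomorphism $D(\mathbb A_X)\cong \mathbb A_X[2n]$, as recalled in Notations $(iii)$. The bridge between this target and the hypotheses is the identification of strong orientations of codimension zero with isomorphisms $\mathbb A_X\cong f^{!}\mathbb A_Y$ in $D^{b}_{c}(X)$ provided by Verdier Duality, exactly in the same spirit as the final part of the proof of Corollary \ref{strong}.

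The first step is to invoke \cite[7.3.2, p.\ 85]{FultonCF} in order to translate the hypothesis that $\theta$ is a strong orientation for $f$ into the statement that the class of $\theta$ in
$$
\mathrm{Hom}_{D^{b}_{c}(X)}\!\left(\mathbb A_X,\, f^{!}\mathbb A_Y\right)
$$
is an isomorphism; equivalently, $f^{!}\mathbb A_Y\cong \mathbb A_X$ in $D^{b}_{c}(X)$. This is exactly the identification already exploited in the second half of Corollary \ref{strong}, but now to be used in the opposite direction: from strong orientation to an isomorphism of complexes.

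The second step is to exploit the hypothesis that $Y$ is an $\mathbb A$-homology manifold. By Notations $(iii)$, this amounts to $D(\mathbb A_Y)\cong \mathbb A_Y[2n]$. Combining this with $f^{*}\mathbb A_Y\cong \mathbb A_X$, with the commutation of $f^{!}$ with shifts, and with the standard exchange formula $D\circ f^{*}\cong f^{!}\circ D$ \cite[p.\ 69]{Dimca2}, one obtains
$$
D(\mathbb A_X)\cong D(f^{*}\mathbb A_Y)\cong f^{!}(D\mathbb A_Y)\cong f^{!}(\mathbb A_Y[2n])\cong f^{!}(\mathbb A_Y)[2n]\cong \mathbb A_X[2n],
$$
which, again by Notations $(iii)$, is exactly the self-duality of $\mathbb A_X[n]$ and therefore shows that $X$ is an $\mathbb A$-homology manifold.

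No substantial obstacle arises in this plan: once the translation of a strong orientation of codimension zero into an isomorphism $\mathbb A_X\cong f^{!}\mathbb A_Y$ is granted, the statement is a purely formal Verdier Duality computation, fully parallel to the argument used at the end of the proof of Corollary \ref{strong}. The only routine check is the commutation of $f^{!}$ with shifts, which is automatic because $f^{!}$ is a functor of triangulated categories.
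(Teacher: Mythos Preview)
Your proposal is correct and follows essentially the same route as the paper: both arguments combine the identification $f^{!}\mathbb A_Y\cong\mathbb A_X$ coming from the strong orientation (via \cite[7.3.2, p.\ 85]{FultonCF}) with the self-duality $D(\mathbb A_Y)\cong\mathbb A_Y[2n]$ and the exchange formula $f^{!}\cong D\circ f^{*}\circ D$ to deduce $D(\mathbb A_X)\cong\mathbb A_X[2n]$. The only difference is cosmetic: the paper writes the chain starting from $f^{!}(\mathbb A_Y)$ and equates it to both $D(\mathbb A_X[2n])$ and $\mathbb A_X$, whereas you start from $D(\mathbb A_X)$ and run the chain in the other direction.
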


\begin{proof} Since $Y$ is an $\mathbb A$-homology manifold, we have:
$$
f^{!}(\mathbb A_Y)=D(f^*(D(\mathbb A_Y)))=D(f^*(\mathbb
A_Y[2n]))=D(\mathbb A_X[2n]).
$$
On the other hand, if $\theta$ is a strong orientation, then
\cite[loc. cit.]{FultonCF}
$$
f^{!}(\mathbb A_Y)\cong \mathbb A_X.
$$
Therefore, we get $D(\mathbb A_X[2n])\cong \mathbb A_X$. This means
that $\mathbb A_X[n]$ is self-dual, i.e. $X$ is an $\mathbb
A$-homology manifold \cite[proof of Theorem 3.7]{Brasselet}.
\end{proof}

\begin{remark}\label{strong3}
Let $f:X\to Y$ be a birational, projective local complete
intersection morphism between complex irreducible quasi-projective
algebraic varieties. Let $\theta\in H^0(X\stackrel{f}\to Y)$ be the
{orientation class} of $f$. Then $\theta$ has degree one (Remark
\ref{remintro}, $(iii)$). But, in general, in view of previous
Proposition \ref{strong2}, $\theta$ cannot be a strong orientation.
\end{remark}

\end{document}